\documentclass[11pt]{article}
 
\usepackage[margin=0.5in]{geometry} 
\usepackage{amsmath,amsthm,amssymb}
\usepackage{mathrsfs}
\usepackage[T1]{fontenc}
\usepackage[english]{babel}
\usepackage{graphicx}
\usepackage{color} 

\usepackage{hyperref}
\usepackage{afterpage}

\usepackage{color}

\usepackage{mathtools}
\usepackage{atbegshi}
\AtBeginDocument{\AtBeginShipoutNext{\AtBeginShipoutDiscard}}
\usepackage{pdfpages} 
\usepackage{longtable} 
\usepackage{amsmath,amsthm,amsfonts,amssymb,relsize,bm} 
\usepackage{xparse,xcolor} 
\usepackage{graphicx,float} 
\usepackage{etoolbox}
\usepackage[shortlabels]{enumitem} 
\usepackage{tikz-cd} 
\usepackage{cite} 
\usepackage[normalem]{ulem}
\usepackage{comment}
\usepackage{tocloft}

\numberwithin{equation}{section}

\theoremstyle{plain}
\newtheorem{theorem}{Theorem}[section]

\newtheorem*{theorem*}{Theorem}
\newtheorem{lemma}{Lemma}[section]
\newtheorem{corollary}{Corollary}[section]

\theoremstyle{definition}

\newtheorem*{definition*}{Definition}

\newtheorem{remark}{Remark}[section]

\begin{document}
 
 \title{Analogues of a formula of Ferrar: what I have learned from Semyon Yakubovich}

\author{{Pedro Ribeiro}} 
\thanks{
{\textit{ Keywords}} :  {Sums of squares, Whittaker functions, Bessel functions, Ferrar's formula, Mellin transforms, Summation formulas, W. L. Ferrar, Semyon Yakubovich}

{\textit{2020 Mathematics Subject Classification} }: {Primary: 11E25, 11M41; Secondary: 33C05, 33C10, 33C15.}

Natixis in Portugal (Groupe BPCE), Rua de Santos Pousada 220, 4000-478 Porto. 

\,\,\,\,\,E-mail of the author: pedromanelribeiro1812@gmail.com}

\date{}
 
\maketitle

\begin{center}
\textit{Dedicated to Semyon Yakubovich, on the occasion of his 25 years in Portugal}
\end{center}

\begin{abstract} 
W. L. Ferrar seems to have been the first mathematician to clearly draw a connection
between the functional aspects of a summation formula and the behavior of
the Dirichlet series underlying it. 
Taking a formula due to him as a starting point, I will describe some new generalizations of Ferrar's formulas and how these were actually
obtained after learning a great deal from Semyon.
I also present a very concise overview of the underlying theory of summation formulas and how the Mellin transform has been the link between mine and Professor Yakubovich's interests. 
\end{abstract}

\pagenumbering{arabic}

\section{Introduction}

Ever since Riemann established the connection between the analytic structure of
an ordinary infinite series and the distribution of the prime numbers, the investigation of the zeros of $\zeta(s)$ has emerged as the main motivation
driving some of the major breakthroughs in Number Theory.

\bigskip{}

One may naturally ask what Riemann’s original motivation was. Judging from his 1859 memoir, it seems clear to most of us that he
really intended to give a formal proof of the \textit{Prime Number Theorem}. However, Riemann gives special attention to the proof of the functional equation for $\zeta(s)$ (indicating two different proofs of the same result), which may suggest that he was more interested in the formal aspects of the function itself. 
Regardless of his true motivations, Riemann undoubtedly understood
that, in order to fully grasp the infinite series
\begin{equation}
\zeta(s)=\sum_{n=1}^{\infty}\frac{1}{n^{s}},\label{Riemann zeta function}
\end{equation}
the argument of the associated function would have to be complex.
Since (\ref{Riemann zeta function}) only makes sense in the half
plane $\text{Re}(s)>1$, he would have to find a way to reflect the
behavior of (\ref{Riemann zeta function}) into the complementary
region $\text{Re}(s)<1$.

The ingenious way Riemann found to do this was through the functional equation
\begin{equation}
\pi^{-\frac{s}{2}}\Gamma\left(\frac{s}{2}\right)\zeta(s)=\pi^{-\frac{1-s}{2}}\Gamma\left(\frac{1-s}{2}\right)\zeta\left(1-s\right),\label{Functional equation Riemann!}
\end{equation}
which he proved by firstly establishing the integral formula
\begin{equation}
\intop_{0}^{\infty}\frac{x^{s-1}}{e^{x}-1}dx=\Gamma(s)\,\zeta(s),\,\,\,\,\,\,\text{Re}(s)>1. \label{riemann formula int}
\end{equation}

Despite his truly magnificent contributions to the study of the function now bearing his name, Riemann stood on the
shoulders of Euler. Euler's most famous contribution to the theory of the infinite series (\ref{Riemann zeta function}) came in 1734, when he proved the astonishing formula
\begin{equation}
\zeta(2n)=\frac{(-1)^{n-1}(2\pi)^{2n}}{2(2n)!}B_{2n},\label{zeta (2n) formula}
\end{equation}
where $B_{2n}$ denotes the $2n^{\text{th}}$ Bernoulli number. A
special case of Euler's formula (\ref{zeta (2n) formula}), with particular
historical importance, is the Basel identity
\begin{equation}
\zeta(2)=\frac{\pi^{2}}{6},\label{Basel Intro}
\end{equation}
for which several proofs are known today. 

\bigskip{}

In his book on the Riemann zeta function [\cite{edwards}, p. 12],
Prof. H. M. Edwards offers a different explanation on Riemann's motivations
to get the functional equation (\ref{Functional equation Riemann!}).
Instead of being solely interested in prime numbers, Edwards argues
that the first motivation behind Riemann's derivation of (\ref{Functional equation Riemann!})
was the necessity of deducing a new proof of Euler's formula (\ref{zeta (2n) formula}). As he aptly writes

\begin{center}
``\textit{ There is no easy way to deduce this famous
formula of Euler\textquoteright s {[}eq. (\ref{zeta (2n) formula})
here{]} from Riemann\textquoteright s integral formula for $\zeta(s)$
{[}equation (\ref{riemann formula int}) here{]} and it may
well have been this problem of deriving {[}eq. (\ref{zeta (2n) formula})
here{]} anew which led Riemann to the discovery of the functional
equation of the zeta function }''. 
\end{center}

\bigskip{}

I still remember how captivated I felt when I first came across this passage in 2018. At the time, Euler’s formula for $\zeta(2n)$, (\ref{zeta (2n) formula}), was my constant mathematical companion. Although I was only 20 then, I had already written and published two papers dedicated to that identity \cite{ribeiro_gazette, ribeiro_monthly}. Discovering that, in some corner of mathematical history, Riemann himself might have shared this fascination deeply resonated with me. It was this connection (however speculative) that inspired me to seriously engage with the deeper and more demanding theory surrounding the Riemann zeta function.

\bigskip{}

Being so interested in the Riemann zeta function, the next natural
step was to study its analytic continuation and the behavior of its
zeros. At this point, I encountered one of the most astonishing results in Mathematics, known as \textit{Hardy's Theorem}. In a brief but remarkable note \cite{hardy_note}, Hardy demonstrated that infinitely many zeros of $\zeta(s)$ lie on the critical line, that is, on $\text{Re}(s) = \frac{1}{2}$. 

\bigskip{}
Three major ingredients were crucial in Hardy's proof:
\begin{enumerate}
\item The function 
\[
\eta(s):=\pi^{-\frac{s}{2}}\Gamma\left(\frac{s}{2}\right)\zeta(s)
\]
is real-valued when $\text{Re}(s)=\frac{1}{2}$.
\item The following Fourier transform is valid
\begin{equation}
\intop_{-\infty}^{\infty}\eta\left(\frac{1}{2}+it\right)\,e^{\omega t}dt=-4\pi\,\cos\left(\frac{\omega}{2}\right)+2\pi\,e^{\frac{i\omega}{2}}\theta\left(e^{2i\omega}\right),\,\,\,0<\omega<\frac{\pi}{4},\label{Integral Hardy}
\end{equation}
where $\theta(x)$ is the Jacobi $\theta-$function,
\[
\theta(x)=\sum_{n\in\mathbb{Z}}e^{-\pi n^{2}x},\,\,\,\,\,\text{Re}(x)>0.
\]
\item The Jacobi $\theta-$function obeys the transformation formula
\begin{equation}
\theta(x)=\frac{1}{\sqrt{x}}\theta\left(\frac{1}{x}\right),\,\,\,\,\text{Re}(x)>0.\label{theta formula intro}
\end{equation}
\end{enumerate}
Each of the items listed above would play a pivotal role in my academic formation, as it was through the study of Hardy’s original proof that I first encountered many of the techniques I regularly used during my PhD. Item 1 introduced me to the concept of analytic continuation; items 2 and 3, in turn, opened the door to Fourier Analysis and the theory of summation formulas, respectively.

\bigskip{}

It was around this time, while I was first studying Hardy’s theorem, that I met Semyon. After completing my Bachelor’s degree in Physics (and already being familiar with several special functions) I realized that I wanted to pursue Number Theory and Analysis more seriously. At the same time, I felt that my mathematical maturity needed significant refinement, which led me to enroll in the Bachelor’s degree in Pure Mathematics at the University of Porto.
I met Semyon in a course on Linear Analysis, an introductory course in Functional Analysis. I vividly remember telling him, after a lecture on Sturm–Liouville problems, that one could construct suitable Dirichlet boundary conditions to obtain a new proof of the Basel identity (\ref{Basel Intro}). He replied \textit{“But it is much easier to use the Fourier series of 
$f(x)=x.$”} I then added, \textit{“Or maybe one could use the Poisson summation formula.”}

At the mention of summation formulas - an idea that had already captivated me, and one so deeply connected to Riemann’s work - his interest was immediate. \textit{“Are you interested in summation formulas?”} he asked. \textit{“Then you should take a look at my paper!”}\footnote{
This conversation was held in Portuguese; the English translation provided here captures the essence of our exchange. The paper Semyon was mentioning was his paper on the M\"untz class of functions, \cite{new_summation_formulas}.}

\bigskip{}

I readily took a look at Semyon’s paper, and soon after at several of the works he produced during the period 2010–2015. The present paper, like almost every work I have written since, may be seen as a corollary of that same decision made in June 2018.

The purpose of this paper, written in tribute to him, is to study a generalization of a well-known summation formula due to W. L. Ferrar.
But what is a summation formula, one may ask? Informally, a \textit{summation formula} may be described as a relation between an infinite series of the type
\begin{equation}
\sum_{n=0}^{\infty}a(n)\,f\left(\lambda_{n}\right),\,\,\,\,\,\,0<\lambda_{n}\nearrow\infty\label{Intro - 1}
\end{equation}
with a corresponding series of the form 
\begin{equation}
\sum_{n=0}^{\infty}b(n)\,\intop_{0}^{\infty}f(y)\,K\left(\mu_{n}y\right)\,dy,\,\,\,\,\,\,0<\mu_{n}\nearrow\infty,\label{Intro - 2}
\end{equation}
where $a(n)$ and $b(n)$ are suitable arithmetical functions allowing the convergence of the infinite series (\ref{Intro - 1})
and (\ref{Intro - 2}) and $K(x)$ is a kernel, usually involving special functions, that only depends on $a(n)$ and $b(n)$. 

\medskip{}

Of course, the most famous summation formula in the form
\begin{equation}
\sum_{n=0}^{\infty}a(n)\,f(\lambda_{n})=\sum_{n=0}^{\infty}b(n)\,\intop_{0}^{\infty}f(y)\,K(\mu_{n}\,y)\,dy\label{Intro - 3}
\end{equation}
is due to Poisson and achieved when $\lambda_{n}=\mu_{n}=n$, $a(0)=\frac{1}{2}$,
$b(0)=1$ and $a(n)=1,\,b(n)=2$. With these substitutions, one
obtains the recognizable identity, valid for the elementary kernel $K(x)=\cos(2\pi x)$, 
\begin{equation}
\frac{1}{2}f(0)+\sum_{n=1}^{\infty}f(n)=\intop_{0}^{\infty}f(y)\,dy+2\sum_{n=1}^{\infty}\intop_{0}^{\infty}f(y)\,\cos(2\pi ny)\,dy.\label{Intro - 4}
\end{equation}
Ever since Dirichlet's proof of Poisson's formula \cite{dirichletserisV}, it
is also known that, if $f$ is a continuous function and of bounded
variation on $[a,b]$, a finite version of
(\ref{Intro - 4}) holds in the following form
\begin{equation}
\sum_{n=a}^{b}{}^{\prime}\,f(n)=\intop_{a}^{b}f(y)\,dy+2\,\sum_{n=1}^{\infty}\intop_{a}^{b}f(y)\,\cos(2\pi n\,y)\,dy,\label{Intro - 5}
\end{equation}
where the prime on the summation sign at the left indicates that, if $a$ or $b$ are integers, then only $\frac{1}{2}\,f(a)$
or $\frac{1}{2}\,f(b)$ contributes to the sum. 

Imposing conditions for the validity of (\ref{Intro - 4}) is a relatively studied subject as there are several ways of approaching Poisson's formula through different methods. The first one was given by Dirichlet in the form (\ref{Intro - 5}) \cite{Davenport} and it consists in seeing (\ref{Intro - 4}) as an expansion of a periodic function into Fourier series. 
Another approach, due to Cauchy \cite{guinand_poisson}, uses Complex Analysis and consists in assuming that $f(x)$ extends to $\mathbb{C}$ as an analytic function. This assumption means that the meromorphic function $g(z)=\pi \cot(\pi z)\,f(z)$ has simple poles located at the integers and an application of the Residue Theorem \cite{stein} yields the summation formula (\ref{Intro - 4}).\footnote{Stronger versions and proofs of Poisson's summation formula appeared in the papers of Mordell and Wilton \cite{Mordell_N.T., Mordell_Poisson, Wilton_Poisson}. As we shall see later on this introduction, Dixon and Ferrar \cite{dixon_ferrar_Poisson_Voronoi} were also prolific on this topic. }

One of the most famous corollaries of Poisson's summation formula (\ref{Intro - 5}) is the beautiful formula
\begin{equation}
\sum_{n\in \mathbb{Z}}e^{-n^2\pi x}=\frac{1}{\sqrt x}\,\sum_{n\in \mathbb{Z}} e^{-n^2 \pi/x},\,\,\,\,\,\,\text{Re}(x)>0, \label{transf formula intro theta book}
\end{equation}
known as \textit{transformation formula for Jacobi's $\theta-$function.}\footnote{There are several proofs of this formula, but the usual temptation is to prove (\ref{transf formula intro theta book}) via Poisson's summation formula (\ref{Intro - 4}), with $f(t)=e^{-\pi x t^2}$, $\text{Re}(x)>0$. In 1840, Cauchy applied his work on the Poisson summation formula and the theta transformation formula to evaluate Gauss' sums \cite{gauss_sums_berndt}.} Perhaps the most meaningful application of the theory of summation formulas, in
particular of the identity (\ref{transf formula intro theta book}), is due to the work of Bernhard Riemann himself. In his famous memoir \cite{riemann}, Riemann employed the theta transformation formula to
prove the functional equation for his famous zeta function,
\begin{equation}
\pi^{-\frac{s}{2}}\Gamma\left(\frac{s}{2}\right)\zeta(s)=\pi^{-\frac{1-s}{2}}\Gamma\left(\frac{1-s}{2}\right)\zeta\left(1-s\right).\label{Riemann equation introduction CRC book}
\end{equation}
As we have already noted, Hardy \cite{hardy_note} also saw that the symmetries of the transformation formula (\ref{transf formula intro theta book}) could be exploited to show a striking feature that $\zeta(s)$, as well as other Dirichlet series\footnote{as shown by Landau \cite{landau_hardy}, who realized 3 months after the publication of Hardy's note \cite{hardy_note}, that his theta function method could be extended to Dirichlet $L-$functions as well as Epstein zeta functions.}, possesses: an infinitude of zeros at its critical line, this is, $\text{Re}(s)=\frac{1}{2}$.
This result of Hardy, together with the first zero density estimates
due to Bohr and Landau \cite{bohr_landau}, are, from a historical perspective, the first pieces of
evidence towards the Riemann hypothesis.\footnote{The reader should take this assertion with a huge grain of salt. In
this passage of our introduction we are essentially paraphrasing H.
M. Edwards \cite{edwards} who writes that the clustering of the zeros near
the critical line $\text{Re}(s)=\frac{1}{2}$ (i.e., the Bohr-Landau
theorem) is the best evidence for the Riemann hypothesis. However,
as shown by Levinson \cite{levinson_avalues}, for any complex $a$, the zeros
of the function $\zeta(s)-a$ also cluster around the critical line
and so, in this sense, the case where $a=0$ might not be special at all.}

\bigskip{}

Poisson's formula and its consequence still constitute, however, very special cases and, from a nineteenth century perspective, were actually treated under very strict conditions. One may ask, thus, under which more general conditions a summation formula of the type (\ref{Intro - 3}) holds and how the kernel $K(x)$, present in the integral transform on the right-hand side of it, depends on the coefficients $a(n)$ and $b(n)$. 
 
\medskip{}

In 1904, Vorono\"i \cite{dirichletserisV} made the following conjecture/question, regarded as the first systematic attempt at generalizing the Poisson summation formula (\ref{Intro - 5}): if one takes $a(n)=b(n)$
and assume that $f(x)$ is continuous on $[a,b]$ with only a finite number of
maxima and minima there, can one prove that there always exist analytic functions
$\delta(x)$ and $K(x)$, depending only on $a(n)$, such that a generalized
form of (\ref{Intro - 5}),  
\begin{equation}
\sum_{n=a}^{b}{}^{\prime}a(n)\,f(n)=\intop_{a}^{b}f(y)\,\delta(y)\,dy+\sum_{n=1}^{\infty}a(n)\,\intop_{a}^{b}f(y)\,K(ny)\,dy,\label{Intro - 6}
\end{equation}
holds?
After considerable efforts, Vorono\"i was able to prove that his conjecture
is true when $a(n)$ is the divisor
function, $d(n)=\sum_{d|n}1$. In this case, the associated kernels, $\delta(x)$
and $K(x)$, are respectively given by a residual term\footnote{with ''residual term'' we mean a term whose contribution is essentially due to the meromorphic/residual of the Dirichlet series attached to the arithmetical sequence $a(n)$. In Vorono\"i's case, where $a(n)=d(n)$, the Dirichlet series is $\zeta^2(s)$.} and a combination of Bessel functions of the second kind, this is, 
\[
\delta(x)=\log(x)+2\gamma, \,\,\,\,\,\,\, K(x)=4\,K_{0}\left(4\pi\sqrt{x}\right)-2\pi\,Y_{0}\left(4\pi\sqrt{x}\right),
\]
where $\gamma$ denotes the Euler-Mascheroni constant.
This special case of (\ref{Intro - 6}) when $a(n)=d(n)$ offers the interesting relation 
\begin{equation}
\sum_{n=a}^{b}{}^{\prime}\,d(n)\,f(n)=\intop_{a}^{b}f(y)\,\left(\log(y)+2\gamma\right)\,dy+\sum_{n=1}^{\infty}d(n)\,\intop_{a}^{b}f(y)\,\left[4\,K_{0}(4\pi\sqrt{ny})-2\pi\,Y_{0}(4\pi\sqrt{ny})\right]\,dy,\label{Intro - 7}
\end{equation}
usually known as Vorono\"i's summation formula. 

\medskip{}
Formula (\ref{Intro - 7}) was later proved by Koshliakov (1928) with
the assumption of $f(x)$ being an analytic function, thus being regarded as an extension of Cauchy's proof of Poisson's formula.\footnote{This version of Vorono\"i's summation formula due to Koshliakov was instrumental in the first rigorous proof of Popov's formula given in \cite{berndt_popov}.} After Koshliakov's proof, several
other arguments began to appear, all of them assuming different conditions
over the function $f$. A. L. Dixon and W. L. Ferrar \cite{dixon_ferrar_lattice} gave a proof of (\ref{Intro - 7})
under the condition that $f\in C^{2}[a,b]$ and Wilton \cite{Wilton_Voronoi} proved
Vorono\"i's formula for functions of bounded variation on $[a,b]$ and
even extended it to $b=\infty$.\footnote{A further extension of Wilton's result to accommodate the case where $a=0$ was later considered by Dixon and Ferrar \cite{dixon_ferrar_Poisson_Voronoi}.} It is universally agreed that the most beautiful example of Vorono\"is formula shows up when the function $f(t)$ in (\ref{Intro - 7}) is taken to be $K_{0}(2\pi zt)$. This result can be explicitly written as
\begin{equation}
\sum_{n=1}^{\infty}d(n)\,K_{0}\left(2\pi nz\right)-\frac{1}{z}\sum_{n=1}^{\infty}d(n)\,K_{0}\left(\frac{2\pi n}{z}\right)
=\frac{1}{4z}\left(\gamma-\log\left(4\pi z\right)\right)-\frac{1}{4}\left(\gamma-\log\left(\frac{4\pi}{z}\right)\right), \,\,\,\,z>0,\label{III-84}
\end{equation}
where, again, $K_{0}(x)$ denotes the modified Bessel function of the second kind. This formula is usually known as Koshliakov's formula, despite the fact that it was discovered by Ramanujan ten years prior \cite{Guinand_Ramanujan}. 
A. L. Dixon and W. L. Ferrar also proved (\ref{III-84}) as a consequence of their variant of Vorono\"i's formula [\cite{dixon_ferrar_Poisson_Voronoi}, pp. 70-71], but Ferrar had earlier employed the Mellin transform to prove (\ref{III-84}) through the functional equation for $\zeta^2(s)$ \cite{ferrar}.\footnote{This alternative argument of Ferrar is the main motivation behind the present paper!} Using a similar
approach to the one employed by Ferrar in his paper on modular relations of the form (\ref{III-84}) \cite{ferrar}, the joint work of F. Oberhettinger and K. L. Soni \cite{relations_equivalent} used Ferrar's idea to express not only (\ref{III-84}), but several other summation formulas with $d(n)$ in (\ref{III-84}) replaced by a more general arithmetical function $a(n)$.  

After proving his conjecture for the arithmetical function $d(n)$,
Vorono\"i also announced a corresponding result for another arithmetical
function, $r_{2}(n)$, which has the role of counting the number of ways in
which a given positive integer $n$ can be expressed as a sum of two
squared integers. Vorono\"i presented the following
formula, analogous to (\ref{Intro - 7}), 
\begin{equation}
\sum_{n=a}^{b}{}^{\prime}\, r_{2}(n)\,f(n)=\pi\,\intop_{a}^{b}f(y)\,dy+\pi\sum_{n=1}^{\infty}r_{2}(n)\,\intop_{a}^{b}f(y)\,J_{0}\left(2\pi\sqrt{ny}\right)\,dy,\label{Intro - 8}
\end{equation}
where $J_{\nu}(x)$ denotes the Bessel function of the first kind. Another
version of (\ref{Intro - 8}) was also established by Sierpi\'nski and
Laudau \cite{dirichletserisV} for functions of bounded variation and an extension
to $b=\infty$, invoking additional conditions on the decay of $f$,
was made by A. L. Dixon and W. L. Ferrar  \cite{dirichletserisV}. As an example of their variant of Vorono\"i's result, Dixon and Ferrar established the following identity [\cite{dixon_ferrar_circle(i)}, p. 51, eq. (3.12)]
\begin{equation}
\frac{a^{\nu/2}\Gamma(\nu+1)}{2\pi^{\nu+1}}\sum_{n=0}^{\infty}\frac{r_{2}(n)}{(n+a)^{\nu+1}}=\sum_{n=0}^{\infty}r_{2}(n)\,n^{\frac{\nu}{2}}\,K_{\nu}\left(2\pi\sqrt{an}\right),\,\,\,\,\,\text{Re}(\nu)>0,\label{III-106}
\end{equation}
which is, in the spirit of the general theory of summation formulas, the first example of a Bessel expansion, as it connects a generalized Dirichlet series (located on the left-hand side of (\ref{III-106})) with a series involving the modified Bessel function, $K_{\nu}(x)$.\footnote{The reader is strongly advised to complement the reading of this introduction with the magnificent survey on summation formulas given in \cite{BDGZ_survey}.} 

\bigskip{}

Even within a quite formal context, the main purpose of Vorono\"i's conjecture was to study the interdependence
between the arithmetical function $a(n)$\footnote{which, recall, fixes the arithmetical function $b(n)$, as Vorono\"i assumes that $a(n)=b(n)$.} and the kernels $\delta(x)$
and $K(x)$: this is the core issue that makes a summation formula relevant. The first step taken towards greater generality addressing this question is due
to Ferrar (1935-1937) [\cite{dirichletserisV}, p. 140], who proved that the kernel $K(x)$
owes its behavior to the functional equations for the Dirichlet series,\footnote{in Ferrar's convention, $\psi(s)$ does not need to be equal to $\phi(s)$, so the arithmetical functions $a(n)$ and $b(n)$ may be assumed to be distinct from one another.}
\begin{equation}
\phi(s)=\sum_{n=1}^{\infty}\frac{a(n)}{\lambda_{n}^{s}},\,\,\,\,\,\,\,\,\psi(s)=\sum_{n=1}^{\infty}\frac{b(n)}{\mu_{n}^{s}},\,\,\,\,\,\text{Re}(s)>\max\{\sigma_{1},\,\sigma_{2}\},\label{Dirichlet series}
\end{equation}
which contain all the data concerning a summation formula of the type (\ref{Intro - 3}). 
Ferrar was evidently the first mathematician to remark that Vorono\"i's
summation formula is equivalent to the functional equation for
$\zeta^{2}(s)$ \cite{ferrar, ferrar_I, ferrar_II}, in the same way as Poisson's summation formula is equivalent to Riemann's functional equation.\footnote{Which in its turn means that the theta transformation formula (\ref{transf formula intro theta book}) is equivalent to the functional equation (\ref{Riemann equation introduction CRC book}). Furthermore, this means that the theta transformation formula is equivalent to Poisson's summation formula applied to a general class of functions. This reveals a \textit{domino property} of summation formulas, a property which states that, in order to prove a general summation formula for a suitable class of functions, it is enough to prove that same summation formula only for a representative example. If this example is strong enough, it can be used to prove the functional equation for one of the Dirichlet series associated to the summation formula we want to prove! Finally, the functional equation for this Dirichlet series will be equivalent to the general, wider version of the summation formula we wanted to prove in the first place! This circle of ideas naturally leads to the first general considerations of summation formulas, started by Bochner, Chandrasekharan and Narasimhan \cite{Bochner_connections, bochner_modular_relations, bochner_chrandrasekharan, arithmetical identities}.}

Ferrar also pointed out one intriguing fact concerning the summation
formulas. He observed, from the point of view of integral transforms \cite{relations_equivalent},
that the kernels lying in each of the summation formulas (\ref{Intro - 7})  and (\ref{Intro - 8}) satisfy the reciprocity relations
\begin{equation}
g(x)=\intop_{0}^{\infty}f(y)\,K(x\,y)\,dy,\,\,\,\,\,\,\,\,f(x)=c\,\intop_{0}^{\infty}g(y)\,K(x\,y)\,dy,
\end{equation}
where $c$ is some normalization constant. The kernel $K(x)$ is usually
called a Fourier kernel, Hankel kernel or even Fourier-Watson kernel \cite{titchmarsh_fourier_integrals}.
The striking connections between the summation formulas involving
the arithmetical functions $a(n)$ and $b(n)$ and their respective
Dirichlet series and integral transforms are one of the main motivations for my collaboration with Semyon first as a Master student and later as his PhD student.  

\bigskip{}

After these early attempts by Ferrar to provide a general method to deal with
summation formulas, S. Chandrasekharan and R. Narasimhan \cite{arithmetical identities} 
took an extensive setting of functional equations (the so called 'Hecke-Type') for the Dirichlet
series $\phi(s)$ and $\psi(s)$ in the form 
\begin{equation}
\Gamma(s)\,\phi(s)=\Gamma(r-s)\,\psi(r-s)\label{Hecke type Functional Equations}
\end{equation}
and used it to deduce several arithmetical identities for the coefficients
$a(n)$ and $b(n)$. Moreover, Chandrasekharan and Narasimhan proved the equivalence between such arithmetical identities and the functional equation (\ref{Hecke type Functional Equations}). One of the most remarkable examples obtained by these authors is the identity involving 
Ramanujan's $\tau-$function [\cite{arithmetical identities}, p. 16, eq. (56)],  
\begin{equation}
\sum_{n=1}^{\infty}\tau(n)\,e^{-x\sqrt{n}}=2^{36}\pi^{\frac{23}{2}}\Gamma\left(\frac{25}{2}\right)\,\sum_{n=1}^{\infty}\frac{\tau(n)}{\left(x^{2}+16\pi^{2}n\right)^{\frac{25}{2}}}, \label{chandra nara intro book}
\end{equation}
valid whenever $\text{Re}(x)>0$. It is striking to see that the previous identity is actually equivalent to the functional equation
\begin{equation}
(2\pi)^{-s}\,\Gamma(s)\,L(s) = (2\pi)^{-(12-s)}\,\Gamma (12-s)\,L(12-s),\,\,\,\,\,\text{with } L(s):=\sum _{n=1}^{\infty}\frac{\tau(n)}{n^s},\,\,\,\text{Re}(s)>\frac{13}{2}. \label{hecke_for_tau_intro}
\end{equation}
Also, Chandrasekharan and Narasimhan employed these general summation formulas to study the average order of the arithmetical functions $a(n)$ and $b(n)$, furnishing general analogues of the divisor and circle problems \cite{average_order_chandrasekharan}.\footnote{The fact that generalized summation formulas may be connected to generalized divisor problems should be no surprise, as one of the first applications that Vorono\"i found for his formula was the first nontrivial estimate for the divisor problem \cite{berndt_divisor}.} In a joint paper with S. Bochner \cite{arithmetical identities}, Chandrasekharan also studied uniqueness properties of Dirichlet series satisfying a certain class of functional equations, proving generalized versions of the celebrated Hamburger's Theorem. 

\bigskip{}

It is within this framework of functional equations and their equivalent modular transformations that the early work of Berndt must be understood.
In his early papers \cite{dirichletserisI, dirichletserisII, dirichletserisIII, dirichletserisIV, dirichletserisV, dirichletserisVI, dirichletserisVII}, Bruce C. Berndt followed
the main guidelines of the previous authors, while placing particular emphasis on concrete interesting cases, such as the ones appearing in the theory of Epstein zeta functions.\footnote{Before his deep engagement with Ramanujan’s work, Berndt devoted much of his early research to summation formulas in the spirit of Hecke, Bochner, Chandrasekharan, and Narasimhan. He was also keenly interested in the distribution of zeros of general Dirichlet series, as evidenced by \cite{berndt_zeros_derivative, berndt_zeros_(i), berndt_zeros_(ii)}. In this direction, he extended the Potter–Titchmarsh method \cite{Titchmarsh_Potter}, originally developed to prove that the Epstein zeta function possesses infinitely many critical zeros, to the case of the Dedekind zeta function \cite{berndt_dedekind}. According to Berndt’s own accounts, his paper \cite{berndt_modular_ramanujan}, together with its connection to contemporaneous results of Emil Grosswald \cite{grosswald_ramanujan}, was instrumental in drawing him toward the work of Srinivasa Ramanujan.} In his first paper \cite{dirichlet and hecke}, based on his doctoral dissertation, Berndt generalized the formula of Dixon and Ferrar (\ref{III-106}) by replacing the specific arithmetic function $r_{2}(n)$ with a general function $a(n)$, subject only to the condition that the associated Dirichlet series $\phi(s)$ satisfy a Hecke-type functional equation (\ref{Hecke type Functional Equations}). As a consequence of this general transformation formula, Berndt obtained, as a special case, a broad extension of the identity of Chandrasekharan and Narasimhan (\ref{chandra nara intro book}), namely
\begin{equation}
\sum_{n=1}^{\infty}\tau(n)\,n^{\frac{\nu+1}{2}}K_{\nu+1}\left(x\sqrt{n}\right)=2^{36+\nu}x^{\nu+1}\pi^{12}\Gamma\left(13+\nu\right)\,\sum_{n=1}^{\infty}\frac{\tau(n)}{\left(x^{2}+16\pi^{2}n\right)^{\nu+13}}, \label{berndt_for_tau_intro_book}
\end{equation}
where $\text{Re}(x),\,\text{Re}(\nu)>0$.\footnote{The reader can actually recover (\ref{chandra nara intro book}) by taking $\nu=-1/2$ on Berndt's general formula.} As the formula obtained by Chandrasekharan and Narasimhan, (\ref{chandra nara intro book}), Berndt’s Bessel expansion for $\tau(n)$ (\ref{berndt_for_tau_intro_book}) is not merely a consequence of Hecke's functional equation (\ref{hecke_for_tau_intro}) but is in fact equivalent to it.\footnote{As far as we know, the earliest publication of this equivalence is given in [\cite{dirichletserisIII}, p. 342, Theorem 8.1]. The reader may also check Theorem 1.1 of \cite{Kanemitsu_Modular_Ramanujan}, where three equivalent statements to Hecke's functional equation (\ref{Hecke type Functional Equations}), including the generalized version of (\ref{berndt_for_tau_intro_book}), are stated.}  

Berndt unified several aspects of this whole branch of Analytic Number
Theory, often connecting the general theory of Dirichlet series with
the \textit{Ramanujan Notebooks} Program, a powerful source of motivation that furnished many of the most interesting examples in the whole area of summation formulas \cite{Guinand_Ramanujan}. Berndt’s enthusiasm for Ramanujan’s mathematics proved contagious and profoundly influenced his students. Among those most deeply influenced was Atul Dixit, who, in joint work with Berndt and Sohn \cite{koshliakov_ramanujan_character}, proved that several transformation formulas discovered by Ramanujan admit natural extensions to Dirichlet $L$-functions.

While working on Ramanujan's lost notebook, Berndt and Dixit \cite{berndt_dixit_digamma} offered
two beautiful proofs of an outstanding formula stated on page 220 of Ramanujan's lost notebook. This formula states
that, if $\alpha,\beta$ are two positive real numbers such that $\alpha\beta=1$,
then the following summation formula holds
\begin{align}
&\sqrt{\alpha}\left\{ \frac{\gamma-\log(2\pi\alpha)}{2\alpha}+\sum_{n=1}^{\infty}\left(\psi\left(n\alpha\right)+\frac{1}{2n\alpha}-\log\left(n\alpha\right)\right)\right\} \nonumber \\
& =\sqrt{\beta}\left\{ \frac{\gamma-\log(2\pi\beta)}{2\beta}+\sum_{n=1}^{\infty}\left(\psi\left(n\beta\right)+\frac{1}{2n\beta}-\log\left(n\beta\right)\right)\right\} \nonumber \\
&=-\frac{1}{\pi^{\frac{3}{2}}}\,\intop_{0}^{\infty}  \left|\Xi\left(\frac{t}{2}\right)\Gamma\left(\frac{-1+it}{4}\right)\right|^{2}\,\frac{\cos\left(\frac{1}{2}t\log\alpha\right)}{1+t^{2}}dt,\label{Integral representation Ramanujan formula Dixit Berndt first paper}
\end{align}
where $\gamma$ denotes the Euler-Mascheroni constant, $\psi(z):=\Gamma^{\prime}(z)/\Gamma(z)$
corresponds to the digamma function and $\Xi(t)$ is the classical
Riemann $\Xi-$function, defined as
\[
\Xi\left(t\right)=\xi\left(\frac{1}{2}+it\right),\,\,\,\,\,\,\,\text{with }\xi(s)=\frac{1}{2}s(s-1)\,\pi^{-\frac{s}{2}}\Gamma\left(\frac{s}{2}\right)\zeta(s).
\]
Ramanujan’s ingenious idea to express (\ref{Integral representation Ramanujan formula Dixit Berndt first paper}) and related formulas through integrals involving the Riemann zeta function on the critical line proved decisive for Dixit’s doctoral work under Berndt's guidance. Dixit substantially extended Ramanujan’s method of deriving summation formulas, obtaining several remarkable generalizations of his results. Among these developments, we find particularly striking Dixit’s generalization of Hardy’s digamma identity [\cite{Dixit_theta}, p. 375]
\begin{align}
\sqrt{\alpha}\,e^{z^{2}/8}\intop_{0}^{\infty}\left(\psi(u+1)-\log u\right)\,e^{-\pi\alpha^{2}u^{2}}\cos\left(\sqrt{\pi}\alpha uz\right)\,du\nonumber \\
=\sqrt{\beta}\,e^{-z^{2}/8}\,\intop_{0}^{\infty}\left(\psi(u+1)-\log u\right)\,e^{-\pi\beta^{2}u^{2}}\cosh\left(\sqrt{\pi}\beta uz\right)\,du.\label{digamma hardy inttttro}
\end{align}
The reader may find several further results of a similar nature in \cite{dixit_RiemannXI, dixit_series}.

\bigskip{}

Up to this point, we have reviewed the historical development of the understanding of summation formulas and, at this stage, the reader may wonder what unifies this vast array of results - ranging from Poisson’s formula to Vorono\"i’s identities and Ramanujan’s transformations. The answer, as we shall emphasize throughout this paper, lies in a single analytic mechanism: the \textbf{Mellin transform}.
 
In an interesting survey, Berndt \cite{dirichletserisV} observes that the hypothesis used in the proofs of
Vorono\"i's formulas (\ref{Intro - 7}) and (\ref{Intro - 8}) generally fall into three broad classes. The first consists of smooth functions, typically $f\in C^{1}[a,b]$ or $f\in C^{2}[a,b]$.
The second comprises functions of bounded variation on $[a,b]$, a class considered in the finite versions of (\ref{Intro - 7}) and (\ref{Intro - 8}). 
Lastly, a third approach relies on the theory of functions in $L_{2}\left(\mathbb{R}_{+}\right)$ together with the techniques of Mellin and Hankel transforms. This third perspective, motivated by the $L_{2}$ analogues of the Fourier and Hankel transforms developed in Titchmarsh's classical treatise \cite{titchmarsh_fourier_integrals}, was initiated by Ferrar \cite{ferrar_I, ferrar_II} and Guinand \cite{guinand_poisson, guinand_I, guinand_II}.\footnote{Guinand \cite{guinand_poisson} also remarks that his $L_{2}$ analogue of the Poisson summation formula admits extensions to Dirichlet characters.} Pearson \cite{pearson} later extended the $L_{2}$ method to the summation formula involving $r_{2}(n)$, namely (\ref{Intro - 8}), while Nasim employed integral transforms within the $L_{2}$ framework to establish (\ref{Intro - 7}) and related formulas \cite{nasim_sigmak, nasim_sigma, nasim_Voronoi}. Despite his extensive contributions to the theory of Dirichlet series, Berndt remarks that he never attempted to develop a general theory of summation formulas within the $L_{2}$ framework.\footnote{To the best of our knowledge, a fully satisfactory set of general conditions ensuring the validity of $L_{2}$ summation formulas for arbitrary Dirichlet series has not yet been established.}
This provided a starting point for Semyon's work on summation formulas, initiated 15 years ago \cite{Yakubovich_Voronoi}. In the
papers \cite{Yakubovich_Voronoi, new_summation_formulas, Yakubovich_Nasim}, he revised the problem raised
by Vorono\"i by introducing two new classes of functions. The first consists of functions that are absolutely
continuous on $\mathbb{R}_{+}$ and whose Mellin transform, $f^{*}(s)$,
satisfies\footnote{while condition (\ref{Intro - 11}) is natural in the $L_{2}$ setting, the results in \cite{Yakubovich_Voronoi, new_summation_formulas, Yakubovich_Nasim} were established only for special Dirichlet series, such as $\zeta^k(s)$ and $\zeta(s-i\tau)\zeta(s+i\tau)$, $\tau >0$, and not for general Dirichlet series.}
\begin{equation}
\intop_{\frac{r}{2}-i\infty}^{\frac{r}{2}+i\infty}\left|s\,f^{*}(s)\right|^{2}|ds|<\infty,\,\,\,\,\,\text{Re}(s)=\frac{r}{2} \text{ is the critical line of $\phi(s)$.}\label{Intro - 11}
\end{equation}
As I have emphasized in my Master’s thesis \cite[pp. 134–135, 167–169]{thesis}, proving summation formulas within the class defined by (\ref{Intro - 11}) requires nontrivial analytic input. In particular, one needs deep estimates for the relevant arithmetical functions (divisor-type problems), as well as subconvex bounds for the Dirichlet series $\phi(s)$ and $\psi(s)$, (\ref{Dirichlet series}), on their critical lines $\Re(s)=r/2$.\footnote{By “subconvex” we mean any bound of smaller order than that predicted by the Phragm\'en–Lindel\"of principle. For instance, in order to establish (\ref{Intro - 7}) within the class (\ref{Intro - 11}), one must invoke the van der Corput estimate $\zeta(1/2+it)\ll_{\varepsilon} |t|^{1/6+\varepsilon}$. This illustrates the difficulty of formulating general conditions for arbitrary arithmetical functions, since subconvex bounds on the critical line cannot be guaranteed apriori for a general Dirichlet series.}

Since these conditions are, in general, quite difficult to ensure, Semyon introduced a second class of functions (the ''M\"untz Class'') in a later publication \cite{new_summation_formulas} to overcome certain restrictions within the condition (\ref{Intro - 11}).  
Essentially, for $n\geq2$, a function of M\"untz-type, $f\in \mathcal{M}_{\alpha, n}$, is a function belonging to $C^{n}\left(\mathbb{R}_{0}^{+}\right)$ which decays, as well as
its first $n$ derivatives, in the form $f^{(k)}(x)=O\left(x^{-\alpha-k}\right),$
for $\alpha>1$ and $x\rightarrow\infty$ \cite{thesis, new_summation_formulas}. He also studied summation formulas under this class involving the M\"obius
function $\mu(n)$, offering equivalent statements to the Riemann Hypothesis.

\bigskip{}

When I first read \cite{new_summation_formulas}, I was genuinely confused. The opening result of the paper was a proof of the Poisson summation formula, yet the argument relied on the functional equation of the Riemann zeta-function. My immediate reaction was: \textit{"why use a result whose proof depends on the Poisson summation formula in order to establish the Poisson summation formula itself?"}.
After several rereadings of the first statement in this paper, I realized that the situation was more subtle. The functional equation of the Riemann zeta-function ultimately depends only on a very specific and easier instance of the Poisson summation formula, namely, the theta transformation formula (\ref{transf formula intro theta book}). However, as discussed in our earlier review of Cauchy’s work, this only requires a version of the Poisson summation formula valid for analytic functions.
In this way, the apparent circularity is resolved: the functional equation is derived from a weaker form of the Poisson summation formula (the analytic version), and is then used to establish a significantly stronger version, valid for the M\"untz class.\footnote{Which is way more general than the class of analytic functions in $\mathbb{C}$! In fact, Semyon told me later that his main motivation was to develop a rigorous argument for a semi-formal proof given in Titchmarsh's text [\cite{titchmarsh_fourier_integrals}, pp. 60 - 64].}
 
\bigskip{}

The connection between the functional equation for the Riemann zeta-function and the Poisson summation was constructed by Semyon using Mellin transforms. As noted in [\cite{dirichletserisV}, p. 140] and already emphasized in the introduction, Ferrar appears to have been the first mathematician to clearly articulate this point of view: that the functional aspects of a summation formula are intimately tied to the analytic behavior of the underlying Dirichlet series via the Mellin transform.
In fact, in a relatively little-known paper \cite{ferrar},
Ferrar embarked on a study of summation formulas that possess
symmetric properties. To give an example, let us recall Koshliakov's formula
(\ref{III-84}), already introduced as the textbook first application of Vorono\"i's formula, 
\begin{equation}
\sqrt{x}\left\{ \gamma-\log\left(\frac{4\pi}{x}\right)+4\,\sum_{n=1}^{\infty}d(n)\,K_{0}\left(2\pi nx\right)\right\}=\frac{1}{\sqrt{x}}\left\{ \gamma-\log\left(4\pi x\right)+4\,\sum_{n=1}^{\infty}d(n)\,K_{0}\left(\frac{2\pi n}{x}\right)\right\} ,\label{Koshliakov informal intro RHUM}
\end{equation}
where $d(n):=\sum_{d|n}1$ is the classical divisor function and $x>0$.
Just like the theta transformation formula (\ref{transf formula intro theta book}), Koshliakov's formula is quite symmetrical. A formal way to view it is as 
just one example of a ``modular'' relation of the form
\begin{equation}
\mathcal{F}(x)=\mathcal{F}\left(\frac{1}{x}\right),\,\,\,\,\,x>0,\label{fgeneral relation ferrar setting}
\end{equation}
where $\mathcal{F}$ is a function of arithmetical interest.\footnote{In the case of (\ref{Koshliakov informal intro RHUM}), $\mathcal{F}(x)$ represents an infinite
series containing $d(n)$ and some residual terms.}

Ferrar proved (\ref{Koshliakov informal intro RHUM}) using the theory of Mellin transforms, an accessible way to make his general theory having very interesting consequences. 
Besides Koshliakov's formula (\ref{Koshliakov informal intro RHUM}),
Ferrar provided another interesting transformation of the form (\ref{fgeneral relation ferrar setting}), offering the following interesting example. 

\begin{theorem*}\label{Ferrar Theorem classic}
If $\alpha,\beta>0$ are such that $\alpha\beta=1$, then the following
transformation formula holds
\begin{align}
2\sqrt{\alpha}\,\sum_{n=1}^{\infty}\left\{ e^{\frac{\pi n^{2}\alpha^{2}}{2}}K_{0}\left(\frac{\pi n^{2}\alpha^{2}}{2}\right)-\frac{1}{\alpha n}\right\} +\frac{1}{\sqrt{\alpha}}\left(\gamma-\log(16\pi)-2\log\left(\alpha\right)\right)\nonumber \\
=2\sqrt{\beta}\,\sum_{n=1}^{\infty}\left\{ e^{\frac{\pi n^{2}\beta^{2}}{2}}K_{0}\left(\frac{\pi n^{2}\beta^{2}}{2}\right)-\frac{1}{\beta n}\right\} +\frac{1}{\sqrt{\beta}}\left(\gamma-\log(16\pi)-2\log\left(\beta\right)\right).\label{ferrar introduction section}
\end{align}
\end{theorem*}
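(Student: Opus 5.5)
The plan is to follow Ferrar's own route and prove the identity with the Mellin transform, reducing it to the functional equation (\ref{Functional equation Riemann!}) in the form $\eta(s)=\eta(1-s)$, where $\eta(s)=\pi^{-s/2}\Gamma(s/2)\zeta(s)$.

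\textbf{Step 1: the basic Mellin transform.} I will use the classical formula
\[
\intop_{0}^{\infty}x^{s-1}e^{x}K_{0}(x)\,dx=\frac{\Gamma(s)^{2}\,\Gamma\left(\frac{1}{2}-s\right)}{2^{s}\sqrt{\pi}},\qquad 0<\text{Re}(s)<\tfrac{1}{2}.
\]
It can be obtained from the integral representation of $e^{x}K_0(x)$ or from tables. The left side converges only for $\text{Re}(s)<1/2$, because $e^{x}K_0(x)\sim\sqrt{\pi/(2x)}$ as $x\to\infty$. Subtracting this asymptotic term, the transform of $e^{x}K_0(x)-\sqrt{\pi/(2x)}$ is given by the same right-hand side in the shifted strip $\tfrac12<\text{Re}(s)<1$.

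\textbf{Step 2: the Mellin transform of the series.} Put $x=\pi n^2\alpha^2/2$ and note that $\sqrt{\pi/(2x)}=1/(\alpha n)$. Taking the Mellin transform in $\alpha$ and summing over $n$, the function
\[
S(\alpha)=\sum_{n\ge1}\left\{e^{\pi n^2\alpha^2/2}K_0\left(\pi n^2\alpha^2/2\right)-\frac{1}{\alpha n}\right\}
\]
should have, for $1<\text{Re}(s)<2$, the transform
\[
G(s)=\frac{1}{2\sqrt{\pi}}\,\pi^{-s/2}\,\Gamma\left(\tfrac{s}{2}\right)^{2}\Gamma\left(\tfrac{1-s}{2}\right)\zeta(s)=\frac{1}{2\sqrt{\pi}}\,\Gamma\left(\tfrac{s}{2}\right)\Gamma\left(\tfrac{1-s}{2}\right)\eta(s).
\]
Interchanging sum and integral is justified by absolute convergence in that strip. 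Since $\Gamma(s/2)\Gamma((1-s)/2)$ is visibly invariant under $s\mapsto 1-s$ and $\eta(s)=\eta(1-s)$, we get $G(s)=G(1-s)$. This symmetry is the whole content of the theorem.

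\textbf{Step 3: contour shift and reflection.} Write $S(\alpha)=\frac{1}{2\pi i}\int_{(c)}G(s)\alpha^{-s}ds$ with $1<c<2$. Then move the line to $\text{Re}(s)=1-c\in(-1,0)$.
\begin{itemize}
\item At $s=1$ there is a double pole, coming from $\zeta(s)$ together with $\Gamma((1-s)/2)$.
\item At $s=0$ there is another double pole, coming from $\Gamma(s/2)^2$ with $\zeta(0)\neq0$.
\end{itemize}
On the new line, substitute $s\mapsto1-s$ and use $G(s)=G(1-s)$. This turns the remaining integral into $\alpha^{-1}S(1/\alpha)=\beta\,S(\beta)$, where the representation from Step 2 now applies with $\beta$ in place of $\alpha$. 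Multiplying by $2\sqrt{\alpha}$ then gives $2\sqrt{\alpha}S(\alpha)-2\sqrt{\beta}S(\beta)=2\sqrt\alpha\,(\text{residues})$.

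It remains to check that the residues produce exactly the stated terms. The residue at $s=1$ should give a term $\alpha^{-1}(A+B\log\alpha)$. The residue at $s=0$ should give a term $C+D\log\alpha$. By the symmetry these should match $\frac{1}{2\sqrt\alpha}(\gamma-\log16\pi-2\log\alpha)-\frac{1}{2\sqrt\beta}(\cdots)$ after multiplication by $2\sqrt\alpha$. The ingredients are the Laurent expansions $\zeta(s)=\frac{1}{s-1}+\gamma+\cdots$ and $\psi(1/2)=-\gamma-2\log2$, together with $\zeta(0)=-1/2$ and $\zeta'(0)/\zeta(0)=\log2\pi$.

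\textbf{Main obstacle.} Justifying the shift is straightforward. By Stirling's formula, $\Gamma(s/2)\Gamma((1-s)/2)\eta(s)$ decays like $e^{-\pi|t|/2}$ times a polynomial on vertical strips, so the horizontal segments vanish. The step I expect to be the real obstacle is the careful double-pole residue bookkeeping: obtaining exactly the constant $\gamma-\log(16\pi)$ and the coefficient $-2\log\alpha$. I would first compute the residue at $s=1$ directly. I would then obtain the residue at $s=0$ through the symmetry $G(s)=G(1-s)$, rather than computing it independently, to avoid sign errors.
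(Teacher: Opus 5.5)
Your proposal is correct and is essentially the paper's own argument specialized to $k=1$: the paper runs the same scheme (Mellin--Barnes representation, contour shift across the double pole, reflection via the functional equation) for the kernel $e^{x/2}W_{\frac{1-k}{2},0}(x)$ and the Dirichlet series $\zeta_k(s)$ attached to $r_k(n)$, then recovers Ferrar's formula from $W_{0,0}(x)=\sqrt{x/\pi}\,K_0(x/2)$; it crosses the same pole twice rather than the poles at $s=1$ and $s=0$, which is equivalent by $G(s)=G(1-s)$, and your residues do come out as $-\frac{1}{2\alpha}\left(\gamma-\log(16\pi)-2\log\alpha\right)$ at $s=1$ and $\frac{1}{2}\left(\gamma-\log(16\pi)-2\log\beta\right)$ at $s=0$. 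The only slip is your stated target: $2\sqrt{\alpha}$ times these residues equals $-\frac{1}{\sqrt{\alpha}}\left(\gamma-\log(16\pi)-2\log\alpha\right)+\frac{1}{\sqrt{\beta}}\left(\gamma-\log(16\pi)-2\log\beta\right)$, i.e.\ the negative of twice what you wrote, which is exactly what is needed to put the residual terms on their respective sides of the theorem.
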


Since the summation index ``$n^{2}$'' is suggestive of the one-dimensional
case of a single sum of squares, one may ask what is the generalization of
Ferrar's formula when the summation variable, ``$n^{2}$'', is replaced
by a sum of $k-$squares, that is, ``$n_{1}^{2}+...+n_{k}^{2}$'', and the resulting infinite series becomes a sequence of $k$ sums. 

Before presenting this generalization, we need to introduce a new special function that will help us finding it. The Whittaker function $W_{\mu,\nu}(x)$ is the solution of Whittaker's
differential equation {[}\cite{whittaker_watson}, p. 337{]},
\begin{equation*}
\frac{d^{2}u}{dx^{2}}+\left(\frac{\mu}{x}+\frac{1}{4x^{2}}-\frac{\nu^{2}}{4x^{2}}-\frac{1}{4}\right)u=0,
\end{equation*}
which is determined uniquely by the property\footnote{here, we suppose that
$\mu\in\mathbb{R}$.}
\begin{equation*}
W_{\mu,\nu}(x)\sim x^{\mu}e^{-\frac{x}{2}},\,\,\,\,x\rightarrow\infty.
\end{equation*}
The theory of Whittaker's functions is usually derived from the study of confluent hypergeometric functions. One can see from this theory that Whittaker's function has the asymptotic behavior
\[
W_{\mu,\nu}(x)=O\left(x^{\text{Re}(\nu)+\frac{1}{2}}\right)+O\left(x^{\frac{1}{2}-\text{Re}(\nu)}\right),\,\,\,\,\nu\neq0\,\,\,\,x\rightarrow0^{+},
\]
\[
W_{\mu,0}(x)=O\left(x^{\frac{1}{2}}\,\log(x)\right),\,\,\,\,x\rightarrow0^{+}
\]
and
\begin{equation}
W_{\mu,\nu}(x)=O\left(x^{\mu}\,e^{-x/2}\right),\,\,\,\,x\rightarrow\infty. \label{asymp behavior whiutttttakkkkkeeeer rhum}
\end{equation}
When we set its first index, $\mu$, as zero, the function $W_{\mu,\nu}(x)$ reduces
to the modified Bessel function of the second kind, $K_{\nu}(x)$, and, in a different arrangement of the indices, it can be
reduced to an elementary function {[}\cite{NIST}, p. 338, 13.18.2,
13.18.9{]}. For example, this means that we can retrieve the reduction formulas
\begin{equation}
W_{0,\nu}\left(2x\right)=\sqrt{\frac{2x}{\pi}}\,K_{\nu}(x),\label{Whittaker to MAcdonald RHUM}
\end{equation}
\begin{equation}
W_{\nu+\frac{1}{2},\nu}(x)=x^{\nu+\frac{1}{2}}e^{-\frac{x}{2}}\label{Whittaker to exponential RHUM}
\end{equation}
as particular examples. 
The first of these formulas, (\ref{Whittaker to MAcdonald
RHUM}), will be of special importance in this paper, as we hope to show in Corollary \ref{corollary as ferrar once afgain} in the third section below.  

One of the main objectives of our contribution to this special volume is to prove a generalization of Ferrar's formula (\ref{ferrar introduction section}), in a way that involves a generalization of the modified Bessel function, $K_{0}(x)$, but also an extension of (\ref{ferrar introduction section}) to the arithmetical function $r_{k}(n)$, which counts the number of representations of a given $n$ as a sum of $k$ squared integers. 

\begin{theorem}\label{gen Ferrar paper Petro}
Let $r_{k}(n)$ represent the number of ways of representing $n$ as a sum of $k$ squares and let $\alpha,\beta>0$ be such that $\alpha \beta=1$. Moreover, for a vector $\mathbf{x}\in\mathbb{R}^{n}$,
let $|\mathbf{x}|$ denote its Euclidean norm.

Then the following summation formula holds
\begin{align}
\alpha^{\frac{k}{2}-1}\,\sum_{n=1}^{\infty}r_{k}(n)\left\{ \frac{e^{\frac{\pi n\alpha^{2}}{2}}}{\sqrt{n}}W_{\frac{1-k}{2},0}\left(\pi n\alpha^{2}\right)-\frac{\pi^{\frac{1-k}{2}}}{n^{\frac{k}{2}}\alpha^{k-1}}\right\} +\pi^{\frac{1-k}{2}}\,\alpha^{-\frac{k}{2}}\zeta_{k-1}\left(\frac{k}{2}\right)\nonumber \\
+\frac{\sqrt{\pi}}{\Gamma\left(\frac{k}{2}\right)\alpha^{\frac{k}{2}}}\left(2\gamma+\psi\left(\frac{k}{2}\right)-2\log\left\{ \prod_{\mathbf{m}\in\mathbb{Z}^{k-1}\setminus\{\mathbf{0}\}}\left(1-e^{-2\pi|\mathbf{m}|}\right)\right\} -\log\left(4\pi\alpha^{2}\right)\right)\nonumber \\
=\beta^{\frac{k}{2}-1}\,\sum_{n=1}^{\infty}r_{k}(n)\left\{ \frac{e^{\frac{\pi n\beta^{2}}{2}}}{\sqrt{n}}W_{\frac{1-k}{2},0}\left(\pi n\beta^{2}\right)-\frac{\pi^{\frac{1-k}{2}}}{n^{\frac{k}{2}}\beta^{k-1}}\right\} +\pi^{\frac{1-k}{2}}\beta^{-\frac{k}{2}}\zeta_{k-1}\left(\frac{k}{2}\right)\nonumber \\
+\frac{\sqrt{\pi}}{\Gamma\left(\frac{k}{2}\right)\beta^{\frac{k}{2}}}\left(2\gamma+\psi\left(\frac{k}{2}\right)-2\log\left\{ \prod_{\mathbf{m}\in\mathbb{Z}^{k-1}\setminus\{\mathbf{0}\}}\left(1-e^{-2\pi|\mathbf{m}|}\right)\right\} -\log\left(4\pi\beta^{2}\right)\right), \label{Formula GENERAL Ferrar!}
\end{align}
where $\gamma$ is the Euler-Mascheroni constant, $\psi(z)$ is Euler's
digamma function and $W_{\mu,\nu}(z)$ denotes the Whittaker function of the second kind.
\end{theorem}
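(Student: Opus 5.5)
The plan is to follow Ferrar's Mellin transform method, with $\zeta(s)^2$ replaced by the Epstein-type series
\[
\zeta_k(s)=\sum_{n=1}^{\infty}\frac{r_k(n)}{n^{s}},\qquad \text{Re}(s)>\frac{k}{2}.
\]
It satisfies the Hecke-type functional equation (\ref{Hecke type Functional Equations}) in the form
\[
\pi^{-s}\Gamma(s)\zeta_k(s)=\pi^{-(k/2-s)}\Gamma\left(\frac{k}{2}-s\right)\zeta_k\left(\frac{k}{2}-s\right),
\]
which follows from the $k$-fold theta transformation (\ref{transf formula intro theta book}). Its only singularity is a simple pole at $s=k/2$ with residue $\pi^{k/2}/\Gamma(k/2)$, and $\zeta_k(0)=-1$.

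\textbf{Step 1 (Mellin transform of the kernel).} I would use the classical Mellin transform of $e^{x/2}W_{\mu,\nu}(x)$ (a Barnes-type integral, available from the confluent hypergeometric theory). For $\mu=\frac{1-k}{2}$ and $\nu=0$ it gives
\[
\int_0^\infty x^{s-1}e^{x/2}W_{\frac{1-k}{2},0}(x)\,dx=\frac{\Gamma\left(s+\frac12\right)^2\Gamma\left(\frac{k-1}{2}-s\right)}{\Gamma\left(\frac{k}{2}\right)^2},\qquad -\frac12<\text{Re}(s)<\frac{k-1}{2}.
\]
The upper limit of the strip reflects the algebraic decay $e^{x/2}W_{\mu,0}(x)\sim x^{\mu}$, by (\ref{asymp behavior whiutttttakkkkkeeeer rhum}). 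This decay is exactly why each summand in (\ref{Formula GENERAL Ferrar!}) subtracts $\pi^{\frac{1-k}{2}}n^{-k/2}\alpha^{1-k}$: that term is the leading term of $n^{-1/2}e^{\pi n\alpha^2/2}W_{\frac{1-k}{2},0}(\pi n\alpha^2)$ as $n\to\infty$. Subtracting the pole term $x^{\mu}$ shifts the Mellin strip one unit to the right, to $\frac{k-1}{2}<\text{Re}(s)<\frac{k+1}{2}$, with the same Gamma-quotient. After the substitution $w=s+\frac12$ this becomes $\frac{k}{2}<\text{Re}(w)<\frac{k}{2}+1$, where $\zeta_k(w)$ converges absolutely. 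Interchanging sum and integral then gives
\[
\sum_{n\ge1} r_k(n)\{\cdots\}=\frac{1}{2\pi i}\int_{(c)}\frac{\Gamma(w)^2\,\Gamma\left(\frac{k}{2}-w\right)}{\Gamma\left(\frac{k}{2}\right)^2}\,(\pi\alpha^2)^{\frac12-w}\,\zeta_k(w)\,dw,\qquad \frac{k}{2}<c<\frac{k}{2}+1.
\]
The Gamma factors in $w$ have to be checked carefully at this point, since everything below depends on them.

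\textbf{Step 2 (shift and reflect).} Next I move the line of integration to $\text{Re}(w)=k/4$, the centre of symmetry of the functional equation. Stirling's formula, together with the convexity bound for $\zeta_k$, justifies the shift. On the way the contour crosses $w=k/2$, where the simple pole of $\zeta_k$ meets the pole of $\Gamma(\frac{k}{2}-w)$. This double pole produces the logarithm $\log(4\pi\alpha^2)$, the digamma value $\psi(k/2)$, $\gamma$, and the constant term of the Laurent expansion of $\zeta_k(w)$ at $w=k/2$. Multiplying the integrand by $\alpha^{\frac{k}{2}-1}$ and writing $\Gamma(w)\zeta_k(w)\pi^{-w}=\eta(w)$, I substitute $w\mapsto \frac{k}{2}-w$ and apply the functional equation. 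Using $\alpha=1/\beta$, the aim is to show that the integral on the symmetric line is invariant under $\alpha\leftrightarrow\beta$, and that the residual terms split into an $\alpha$-part and a $\beta$-part. The subtracted series contributes $\pi^{\frac{1-k}{2}}\alpha^{-\frac{k}{2}}\zeta_k(\frac{k}{2})$-type terms once they are regularized. This is how a zeta value at $k/2$ enters, and it is eventually expressed through $\zeta_{k-1}(k/2)$. If the Gamma-quotient is not literally symmetric under $w\mapsto\frac{k}{2}-w$, I would use the duplication and reflection formulas to rewrite the non-$\eta$ factor; in the worst case I would pick up an extra residue at $w=0$, whose double pole involves $\zeta_k(0)=-1$ and $\zeta_k'(0)$, the latter related by the functional equation to the Laurent constant at $k/2$.

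\textbf{Step 3 (the constant).} The main obstacle is to identify the Laurent constant of $\zeta_k(w)$ at $w=k/2$ explicitly, in the form that contains $\zeta_{k-1}(k/2)$ and $\log\prod_{\mathbf{m}\in\mathbb{Z}^{k-1}\setminus\{\mathbf{0}\}}(1-e^{-2\pi|\mathbf{m}|})$. For this I would use a Chowla–Selberg type decomposition. I separate one coordinate, $n_k$, and write
\[
\zeta_k(s)=\zeta_{k-1}(s)+2\zeta(2s)+\sum_{n_k\neq0}\ \sum_{\mathbf{m}\in\mathbb{Z}^{k-1}\setminus\{\mathbf{0}\}}(n_k^2+|\mathbf{m}|^2)^{-s}.
\]
I apply Poisson summation (\ref{Intro - 4}) in $n_k$ to the last piece. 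This gives a term proportional to $\frac{\Gamma(s-\frac12)}{\Gamma(s)}\zeta_{k-1}(s-\frac12)$, which carries the pole at $s=k/2$, plus an exponentially convergent series of $K_{s-\frac12}(2\pi|\mathbf{m}|\,|n_k|)$ terms. At $s=k/2$ these terms give the sum of $\log(1-e^{-2\pi|\mathbf{m}|})$, that is, the logarithm of the product. Expanding the $\Gamma$-ratio and $\zeta_{k-1}(s-\frac12)$ about $s=k/2$ yields the combination $2\gamma+\psi(k/2)$ together with the $\zeta_{k-1}(k/2)$ term. Collecting these constants with those from Step 2 should reproduce both sides of (\ref{Formula GENERAL Ferrar!}) exactly. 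The bookkeeping of constants, and checking that every term matches in its $\alpha$-dependence, is where I expect most of the work.
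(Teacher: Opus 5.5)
Your Steps 1 and 2 are correct. Shifting straight to the symmetry line $\text{Re}(w)=\frac k4$ is slightly more economical than the paper's route, which moves to $\text{Re}(s)=\frac k2-\mu'$, reflects, and shifts back, computing the double-pole residue twice. Your hedge about the Gamma factors is unnecessary. Since $\Gamma(w)^2\zeta_k(w)(\pi x)^{\frac12-w}=\sqrt{\pi}\,\eta_k(w)\,\Gamma(w)\,x^{\frac12-w}$, the integrand is $\frac{\sqrt{\pi}}{\Gamma(k/2)^2}\,\eta_k(w)\Gamma(w)\Gamma(\frac k2-w)\,x^{\frac12-w}$, and $\Gamma(w)\Gamma(\frac k2-w)$ is invariant under $w\mapsto\frac k2-w$. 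Hence $x^{\frac k4-\frac12}$ times the integral on $\text{Re}(w)=\frac k4$ is invariant under $x\mapsto 1/x$, and no pole at $w=0$ is crossed. The single residue at $w=\frac k2$ then produces exactly the residual terms of (\ref{Formula GENERAL Ferrar!}), once the Laurent constant $C_0$ of $\zeta_k$ at $\frac k2$ is known.

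The genuine gap is Step 3: you apply Poisson summation in the wrong variable.

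\begin{itemize}
\item Summing over the single coordinate $n_k$ with $\mathbf m\in\mathbb Z^{k-1}$ fixed is Watson's formula (\ref{watson_intro_book}). It yields Bessel functions $K_{s-\frac12}(2\pi n|\mathbf m|)$, whose index at $s=\frac k2$ is $\frac{k-1}{2}$, not $\frac12$.
\item So for $k\ge3$ the double series does not collapse to $\sum_{\mathbf m}\log(1-e^{-2\pi|\mathbf m|})$; for odd $k$ the kernel is not even elementary. The two orientations coincide only when $k=2$.
\item To apply Poisson in $n_k$ you must restore the $n_k=0$ terms, and this cancels your $\zeta_{k-1}(s)$.
\item The polar term $\sqrt\pi\,\frac{\Gamma(s-\frac12)}{\Gamma(s)}\zeta_{k-1}(s-\frac12)$ sits at the pole of $\zeta_{k-1}$. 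Expanding it needs the Laurent data of $\zeta_{k-1}$ at $\frac{k-1}{2}$ (a recursion), and the leftover term is $2\zeta(k)$, never $\zeta_{k-1}(\frac k2)$.
\end{itemize}

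You would obtain a valid but different expression for $C_0$, and matching it to the constants in the statement would require a further nontrivial identity.

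The fix, which is the paper's Lemma \ref{continuation Epstein a la Selbegr chowla}, is to reverse the roles. Keep $n\neq0$ as the outer sum and apply the $(k-1)$-dimensional formula in $\mathbf m$, namely Popov's identity (\ref{ferrar_berndt_sum_of_squares_VTEX1}) with $k$ replaced by $k-1$ and $\beta=n^2$. Then:

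\begin{itemize}
\item The $n=0$ term $\zeta_{k-1}(s)$ is regular at $\frac k2$ and gives $\zeta_{k-1}(\frac k2)$.
\item The pole comes from $\frac{2\pi^{(k-1)/2}\Gamma(s-\frac{k-1}{2})}{\Gamma(s)}\zeta(2s-k+1)$, which contributes $\gamma-2\log 2-\psi(\frac k2)$.
\item The Bessel index $s-\frac{k-1}{2}$ equals $\frac12$ at $s=\frac k2$, so $K_{1/2}$ is elementary. The identity $\sum_{n\ge1}e^{-2\pi n|\mathbf m|}/n=-\log(1-e^{-2\pi|\mathbf m|})$ then gives the product over $\mathbb Z^{k-1}\setminus\{\mathbf 0\}$.
\end{itemize}
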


The previous theorem is a generalization of (\ref{ferrar introduction section}) due to the very nature of the function $r_{k}(n)$. Since $r_{k}(n):= \# \left\{(n_{1},..., n_{k})\, :\, n=n_{1}^2+...+n_{k}^2 \right\}$, one can easily see that $r_{1}(n)=2$ if $n$ is a perfect square and zero otherwise. Replacing $k=1$ in (\ref{ferrar introduction section}), one can see that the infinite product over $\mathbb{Z}^{k-1} \setminus \{\mathbf{0}\}$ does not make sense, so it contributes as a zero term and the resulting summation formula will be precisely (\ref{ferrar introduction section})!
Just like Ferrar, we can also obtain a generalization not only of (\ref{ferrar introduction section}) but also of Koshliakov's formula (\ref{Koshliakov informal intro RHUM}) involving the general arithmetical function $r_{k}(n)$. The second result that we present in this paper is given as follows. 

\begin{theorem}\label{Koshliakov theorem rk(n)}

Let $\alpha,\beta$ be two positive real numbers such that $\alpha\beta=\pi^{2}$.
Then the following identity holds
\begin{align}
\alpha^{\frac{k}{2}}\left\{ 2\eta_{k-1}\left(\frac{k}{2}\right)+2\gamma-4\log\left\{ \prod_{\mathbf{m}\in\mathbb{Z}^{k-1}\setminus\{\mathbf{0}\}}\left(1-e^{-2\pi|\mathbf{m}|}\right)\right\}-2\log\left(4\beta\right)+2\sum_{m,n=1}^{\infty}r_{k}(m)\,r_{k}(n)\,K_{0}\left(2\sqrt{m\,n}\,\alpha\right)\right\}\nonumber \\
=\beta^{\frac{k}{2}}\left\{ 2 \eta_{k-1}\left(\frac{k}{2}\right)+2\gamma-4\log\left\{ \prod_{\mathbf{m}\in\mathbb{Z}^{k-1}\setminus\{\mathbf{0}\}}\left(1-e^{-2\pi|\mathbf{m}|}\right)\right\}-2\log\left(4\alpha\right)+2\sum_{m,n=1}^{\infty}r_{k}(m)\,r_{k}(n)\,K_{0}\left(2\sqrt{m\,n}\beta\right)\right\}. \label{general_KOSH_rk(n)}
\end{align}

\end{theorem}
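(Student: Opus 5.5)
The plan is to follow Ferrar's Mellin-transform proof of Koshliakov's formula (\ref{Koshliakov informal intro RHUM}), with $\zeta^{2}(s)$ replaced by the square of the Epstein zeta function $\zeta_{k}(s):=\sum_{n\ge1}r_{k}(n)\,n^{-s}$, $\text{Re}(s)>k/2$.

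\textbf{Step 1 (Mellin transform of the double series).} From
\[
\intop_{0}^{\infty}x^{s-1}K_{0}(cx)\,dx=2^{s-2}c^{-s}\,\Gamma\left(\tfrac{s}{2}\right)^{2},
\]
with $c=2\sqrt{mn}$, I get
\[
\intop_{0}^{\infty}x^{s-1}\sum_{m,n\ge1}r_{k}(m)\,r_{k}(n)\,K_{0}\left(2\sqrt{mn}\,x\right)dx=\tfrac14\,\Gamma\left(\tfrac{s}{2}\right)^{2}\zeta_{k}\left(\tfrac{s}{2}\right)^{2}.
\]
Termwise integration is justified by absolute convergence for $\text{Re}(s)>k$. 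Setting $w=s/2$ and inverting, I obtain the representation
\[
\sum_{m,n}r_{k}(m)\,r_{k}(n)\,K_{0}\left(2\sqrt{mn}\,\alpha\right)=\frac{1}{2\pi i}\intop_{(c)}\Gamma(w)^{2}\zeta_{k}(w)^{2}\,\alpha^{-2w}\,dw,\qquad c>\tfrac{k}{2}.
\]

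\textbf{Step 2 (contour shift).} I move the line of integration to $\text{Re}(w)=c'<0$. The horizontal segments vanish because $\Gamma(w)^{2}$ decays exponentially, while $\zeta_{k}(w)$ grows at most polynomially in vertical strips (Phragm\'en--Lindel\"of combined with the functional equation). Only two double poles are crossed.
\begin{itemize}
\item At $w=k/2$, $\zeta_{k}$ has a simple pole with residue $\pi^{k/2}/\Gamma(k/2)$, so $\zeta_{k}^{2}$ has a double pole there.
\item At $w=0$, $\Gamma^{2}$ has a double pole, and $\zeta_{k}(0)=-1$.
\end{itemize}
The residues involve $\log\alpha$, $\gamma$, $\psi(k/2)$, $\zeta_{k}'(0)$, and the constant term $C_{k}$ in the Laurent expansion $\zeta_{k}(w)=\frac{\pi^{k/2}/\Gamma(k/2)}{w-k/2}+C_{k}+O(w-k/2)$. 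In the remaining integral I substitute the functional equation
\[
\pi^{-w}\Gamma(w)\,\zeta_{k}(w)=\pi^{-(k/2-w)}\Gamma\left(\tfrac{k}{2}-w\right)\zeta_{k}\left(\tfrac{k}{2}-w\right)
\]
and change variables $w\mapsto k/2-w$. Because $\alpha\beta=\pi^{2}$, the shifted integral becomes $(\beta/\alpha)^{k/2}$ times the original representation with $\alpha$ replaced by $\beta$. Rearranging yields an identity of the shape (\ref{general_KOSH_rk(n)}), with the residual terms expressed through $C_{k}$ and $\zeta_{k}'(0)$. Differentiating the functional equation at $w=0$ expresses $\zeta_{k}'(0)$ in terms of $C_{k}$, $\gamma$, $\psi(k/2)$ and $\log\pi$. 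The residue terms then arrange themselves symmetrically as $\alpha^{k/2}\{\cdots-2\log(4\beta)\}$ and $\beta^{k/2}\{\cdots-2\log(4\alpha)\}$. The factor $\log 4$ comes from the $2^{s}$ in the Mellin kernel.

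\textbf{Step 3 (the constant $C_{k}$), which I expect to be the main obstacle.} This is the Kronecker-limit-type evaluation that produces $\eta_{k-1}(k/2)$ and the product over $\mathbb{Z}^{k-1}\setminus\{\mathbf{0}\}$. I would split $\mathbf{n}=(n_{1},\mathbf{m})$ with $\mathbf{m}\in\mathbb{Z}^{k-1}$. I then apply Poisson summation (equivalently, the theta transformation (\ref{transf formula intro theta book})) in the single variable $n_{1}$ for each fixed $\mathbf{m}\neq\mathbf{0}$. This gives a Chowla--Selberg decomposition of $\zeta_{k}(w)$ into three parts:
\begin{itemize}
\item the $\mathbf{m}=\mathbf{0}$ term $2\zeta(2w)$;
\item a term $\sqrt{\pi}\,\Gamma(w-\tfrac12)\Gamma(w)^{-1}\zeta_{k-1}(w-\tfrac12)$, which carries the pole;
\item a rapidly convergent series $\frac{4\pi^{w}}{\Gamma(w)}\sum_{\mathbf{m}\neq\mathbf{0}}\sum_{j\ge1}(j/|\mathbf{m}|)^{w-1/2}K_{w-1/2}(2\pi j|\mathbf{m}|)$.
\end{itemize}
At the point where the Bessel index becomes $\pm\tfrac12$, the kernel $K_{1/2}$ is elementary. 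The inner $j$-sum is then $\sum_{j}j^{-1}e^{-2\pi j|\mathbf{m}|}=-\log(1-e^{-2\pi|\mathbf{m}|})$, which produces the infinite product. The remaining finite part of the $\zeta_{k-1}$ term supplies $\eta_{k-1}(k/2)$. The care needed here is to track the Laurent expansion of $\Gamma(w-\tfrac12)\zeta_{k-1}(w-\tfrac12)/\Gamma(w)$, and to move between $w=k/2$ and $w=0$ via the functional equation so that the Bessel index is the favourable one. After inserting $C_{k}$, the final step is a routine bookkeeping check that all constants on the two sides match.
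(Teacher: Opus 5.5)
Your Steps 1--2 follow the paper's proof exactly: the Mellin--Barnes representation of $K_0$, the shift past the double poles of $\eta_k^2(w)\alpha^{-2w}$ at $w=\frac k2$ and $w=0$, and the reflection of the leftover integral, which does become $(\beta/\alpha)^{k/2}$ times the same integral at $\beta$. Two slips need fixing before the final bookkeeping will close.

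First, with $s=2w$ you have $ds=2\,dw$, so the inverse Mellin transform of $\frac14\Gamma(s/2)^2\zeta_k(s/2)^2$ is
\[
\sum_{m,n\ge 1}r_k(m)\,r_k(n)\,K_0\bigl(2\sqrt{mn}\,\alpha\bigr)=\frac{1}{4\pi i}\int_{(c)}\Gamma(w)^2\zeta_k(w)^2\,\alpha^{-2w}\,dw ,
\]
not $\frac{1}{2\pi i}\int$. This factor is exactly why $2\sum$, and not $\sum$, stands next to the residual constants in (\ref{general_KOSH_rk(n)}). With your normalization the resulting identity is false, as the case $k=1$ (Koshliakov's formula) shows.

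Second, the $\log 4$ does not come from the Mellin kernel. The $2^{s}$ there cancels exactly against the $2$ in $K_0(2\sqrt{mn}\,\alpha)$, as your own transform $\frac14\Gamma(s/2)^2\zeta_k(s/2)^2$ shows. The $\log 4$ is arithmetic: it comes from the $-2\log 2$ in the constant term of $\zeta_k$ at $w=\frac k2$.

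Your Step 3 genuinely differs from the paper's Lemma~\ref{continuation Epstein a la Selbegr chowla}, and it only works if you make the evaluation point explicit.

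The paper fixes the last coordinate $n_k\neq 0$ and applies Popov's formula (\ref{ferrar_berndt_sum_of_squares_VTEX1}) in the remaining $(k-1)$-dimensional variable. The Bessel index is then $s-\frac{k-1}{2}$, which equals $\frac12$ at the pole, so the constant is read off directly at $s=\frac k2$:
\begin{itemize}
\item $\zeta_{k-1}(\frac k2)$ comes from the slice $n_k=0$;
\item $-2\log 2$ comes from $\psi(\frac12)$;
\item the product comes from the $K_{1/2}$ series.
\end{itemize}

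Your decomposition applies Watson's formula (\ref{watson_intro_book}) in $n_1$ for each fixed $\mathbf m\neq\mathbf 0$. There the index is $w-\frac12$, which at $w=\frac k2$ equals $\frac{k-1}{2}\neq\pm\frac12$ for $k\ge3$. In addition, the $\zeta_{k-1}(w-\frac12)$ term would need the Laurent constant of $\zeta_{k-1}$, which is a recursion. So your phrase about tracking the Laurent expansion of $\Gamma(w-\frac12)\zeta_{k-1}(w-\frac12)/\Gamma(w)$ points to the wrong place.

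Work only at $w=0$, where $1/\Gamma(w)$ has a simple zero, and differentiate:
\[
\zeta_k'(0)=4\zeta'(0)+\sqrt{\pi}\,\Gamma\bigl(-\tfrac12\bigr)\zeta_{k-1}\bigl(-\tfrac12\bigr)-2\log\Phi_k(1)=\eta_{k-1}\bigl(\tfrac k2\bigr)-2\log(2\pi)-2\log\Phi_k(1).
\]
Here $K_{-1/2}=K_{1/2}$ produces the product, and the functional equation of $\zeta_{k-1}$ turns $\sqrt\pi\,\Gamma(-\frac12)\zeta_{k-1}(-\frac12)$ into $\eta_{k-1}(\frac k2)$. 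In this route the $\log 4$ comes from $\zeta'(0)=-\frac12\log(2\pi)$.

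Both residues can then be expressed through $\zeta_k'(0)$ alone, so you never need $C_k$ itself. For example, the residue of $\Gamma(w)^2\zeta_k(w)^2\alpha^{-2w}$ at $w=0$ is $-2\bigl(\zeta_k'(0)+\gamma+\log\alpha\bigr)$. This reproduces the paper's constants; for $k=2$ it gives $\zeta_2'(0)=\log(4\pi)-4\log\Gamma(\frac14)$, consistent with $\zeta_2(s)=4\zeta(s)L(s,\chi_4)$.

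Comparing the two routes:
\begin{itemize}
\item Yours needs only the one-dimensional theta/Watson formula plus the functional equation for $\zeta_{k-1}$.
\item The paper's needs Popov's $(k-1)$-dimensional formula but obtains the constant directly at the pole, without passing through $w=0$.
\end{itemize}
The two decompositions are mirror images of each other under $w\mapsto\frac k2-w$.
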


Why is (\ref{general_KOSH_rk(n)}) a generalization of Koshliakov's formula (\ref{Koshliakov informal intro RHUM})?  To see why this is the case, let $\tilde{d}_{k}(n)$ denote the generalized divisor function given by $\tilde{d}_{k}(n):=\sum_{d|n}r_{k}(d)\,r_{k}\left(\frac{n}{d}\right)$.
Then (\ref{general_KOSH_rk(n)}) is equivalent to the transformation formula
\begin{align*}
\alpha^{\frac{k}{2}}\left\{ 2\pi^{-\frac{k}{2}}\Gamma\left(\frac{k}{2}\right)\zeta_{k-1}\left(\frac{k}{2}\right)+2\gamma-4\log\left\{ \prod_{\mathbf{m}\in\mathbb{Z}^{k-1}\setminus\{\mathbf{0}\}}\left(1-e^{-2\pi|\mathbf{m}|}\right)\right\}-2\log\left(4\beta\right)+2\sum_{n=1}^{\infty}\tilde{d}_{k}(n)\,K_{0}\left(2\sqrt{n}\,\alpha\right)\right\} \\
=\beta^{\frac{k}{2}}\left\{ 2 \eta_{k-1}\left(\frac{k}{2}\right)+2\gamma-4\log\left\{ \prod_{\mathbf{m}\in\mathbb{Z}^{k-1}\setminus\{\mathbf{0}\}}\left(1-e^{-2\pi|\mathbf{m}|}\right)\right\}-2\log\left(4\alpha\right)+2\sum_{n=1}^{\infty}\tilde{d}_{k}(n)\,K_{0}\left(2\sqrt{n}\,\beta\right)\right\}. 
\end{align*}
Since $r_{1}(n)=2$ iff $n$ is a perfect square and 0 otherwise, it is quite simple to check that $\tilde{d}_{1}(n) = 4\,d(n)$ if $n$ is a perfect square, while $\tilde{d}_{1}(n)=0$ otherwise. Therefore, (\ref{general_KOSH_rk(n)}) reduces to (\ref{Koshliakov informal intro RHUM}) in the case where $k=1$.  

\bigskip{}

So why have I chosen to focus on a paper of Ferrar in this tribute to Semyon? The answer lies in a fundamental shift of perspective that Ferrar was among the first to articulate with full clarity. He recognized that summation formulas are not merely isolated analytical identities, but rather manifestations of deeper structural relations: the functional equations satisfied by the associated Dirichlet series.

Before Ferrar, the prevailing approach was highly technical: one established summation formulas through intricate analytic arguments and only afterward extracted their consequences. Ferrar himself initially followed this path in his joint work with Dixon \cite{dixon_ferrar_lattice, dixon_ferrar_circle(i), dixon_ferrar_circle(ii)}. However, he later realized that these formulas were not the starting point, but rather the shadow cast by a more fundamental principle. In his subsequent work \cite{ferrar}, he made this philosophy explicit by showing that summation formulas and functional equations are, in essence, equivalent formulations of the same underlying phenomenon.

At the heart of this realization lies the \textbf{Mellin transform}, which serves as a natural bridge between Dirichlet series and integral transforms. It is precisely this point of view that has profoundly influenced Semyon’s work, ever since he started working under O. Marichev at the age of 19, at a time when he was simultaneously engaged in both academic study and professional work.\footnote{At the time, he balanced his studies with a demanding technical job that often required night shifts. This early display of discipline and commitment is perhaps indicative of the remarkable work ethic that still defines his personal and professional life: his days begin as early as 5 a.m., followed by a regular morning run of approximately 4 km in Oliveira do Douro, before heading to the Faculty, often arriving so early that, in the absence of any staff at that hour, he is the one who opens the doors for the baker delivering supplies to the caf\'e.}
Much like Ferrar before him, he early understood that the Mellin transform is not simply a technical tool, but a unifying framework through which one may systematically construct and invert a wide class of integral transforms, and thereby generate new summation formulas.\footnote{Semyon's interest in summation formulas started after long months of work on Salem's problem. As far as I could see, it was a way for him to disconnect from the heavier theory of integral transforms.}
This way of dealing with integral transforms and, in a broader sense, with special functions themselves, is perhaps best captured in his own words, expressed repeatedly in our conversations:
\begin{center}
\textit{“Marichev really had an ingenious idea. That he could use a universal transform, the Mellin transform, and, from it, study all the others.”}\,\,\,\footnote{Semyon Yakubovich, circa Aug 2022. We were discussing the Fourier
transform of the Whittaker function with respect to its second index. This discussion resulted in the joint paper \cite{RYPIT}.}
\end{center}
\begin{center}
\textit{“I do not trust the formulas contained in the usual books (G.R., P.M.B., etc.). I essentially use the table of Mellin transforms created by Marichev.”}\,\,\,\footnote{Semyon Yakubovich, in several of our discussions. The table he was referring to is Marichev's handbook \cite{marichev_83}, which has been recently replaced by the more complete table \cite{handbook_marichev}.}
\end{center}

These remarks reflect a methodological stance: rather than relying on isolated identities, one works within Mellin transform's space in which such identities emerge naturally from functional relations. It was precisely this perspective that guided Semyon’s research from an early stage, when he simplified Wimp's formula as a Master's student, and which continues to shape his contributions to the theory of integral transforms and summation formulas.

In this context, the role of the Mellin transform becomes not merely technical, but conceptual. It provides the language through which functional equations, modular-type transformations, and summation formulas may be seen as different expressions of a single analytic structure. This unifying power was remarkably highlighted by Bochner \cite{bochner_fourier}, who wrote:
\begin{center}
\textit{“Gauss had to work hard to derive his reciprocity law for Gaussian sums. However, Dirichlet did it smoothly after him with the Poisson summation formula for Fourier series (...). Erich Hecke, Dirichlet’s counterpart in the twentieth century (...) bested eminent Fourier analysts of his generation by \textbf{operating intricately with the Mellin transform} on the body of the Poisson summation formula in several variables.”}
\end{center}

In my own case, I can only say that I have learned a great deal from him during our fruitful collaboration that lasted from 2020 to 2024. The fact that I now instinctively view summation formulas through the action of their associated Mellin transforms is, perhaps, a small indication that I have absorbed at least an infinitesimal portion of Semyon's deep understanding of classical analysis.

In this spirit, all the proofs that follow are carried out within the framework of the Mellin transform, highlighting its role as a decisive tool in the theory of summation formulas. From this perspective, intricate expressions involving Bessel and Whittaker functions are reduced to simple ratios of gamma functions, which may be handled with a fluency not unlike that of elementary algebra.

\bigskip{}

Ralph Waldo Emerson once wrote that \textit{“The essence of greatness is the perception that virtue is enough”}. In my experience, Semyon embodies this principle in a way that is both rare and instructive. In an environment where recognition, position, visibility and, above all, unrestrained vanity can too easily become ends in themselves, he has consistently remained guided by a quieter standard: that of intellectual honesty, personal discipline, and continuous self-improvement.

This disposition became particularly evident to me at a decisive moment in my own journey. At the conclusion of my PhD, I chose to pursue a career outside academia, a decision that, in my immediate surroundings, was often met with surprise and, more often than not, with incomprehension. Semyon, however, was unwavering in his support. He understood, with the same clarity one associates with mathematical truth, the structural difficulties faced by early-career researchers, and recognized that such a path would not, in my case, offer a sustainable or dignified prospect.\footnote{It may shock some of the readers but, at present, a typical postdoctoral position in Portugal corresponds to a net monthly income of approximately 1901 euros, paid over 12 months (and not 14). This places it only marginally above the national average salary, despite the level of specialization required. Moreover, such positions generally do not include standard labor protections, social benefits, or long-term stability, most of them being short-term contracts. By contrast, entry-level positions for PhD holders in the private sector often offer significantly higher compensation, frequently exceeding this value by a margin of at least 170\%, together with additional benefits.}

In this, he stands in marked contrast with a more common academic posture, in which younger researchers are encouraged to remain within the system under the implicit promise of future stability, often at the cost of prolonged uncertainty. In less favorable instances, which occur far more frequently than is acknowledged, this dynamic fosters a form of \textbf{structural dependence}, whereby early-career researchers remain in extended states of precarity while sustaining the productivity of those above them within a fundamentally pyramid-like scheme. 
This structure is particularly troubling and self-perpetuating: the very conditions of precarity that constrain younger researchers simultaneously contribute to the maintenance and, at times, the quiet reinforcement of the system itself, whose upper layers it disproportionately benefits and which cannot, in practice, absorb those at its base.

His advice, by contrast, was guided not by institutional interest, but by a genuine concern for my well-being and future, long-term stability, and the ability to build a family. Having himself experienced a path marked by geographical instability, moving from Belarus through Japan, the Netherlands, and Belgium before eventually settling in Portugal, he knew from lived experience the disproportionate demands imposed by an academic career.

For that honesty, I remain deeply grateful.

\section{Preliminary Results}

The main results in this paper are generalizations of two known summation formulas that involve the arithmetical function $r_{k}(n)$. Therefore, we should understand first the Dirichlet series that are involved in this story. 
Indeed, if we construct the Dirichlet series attached to $r_{k}(n)$, 
\begin{equation}
\zeta_{k}(s)=\sum_{n=1}^{\infty}\frac{r_{k}(n)}{n^{s}},\,\,\,\,\text{Re}(s)>\frac{k}{2},\label{definition zeta k}
\end{equation}
then $\zeta_{k}(s)$ can be continued to the complex plane as a meromorphic
function possessing only a simple pole at $s=\frac{k}{2}$ with residue
$\pi^{k/2}/\Gamma(k/2)$. Moreover, it satisfies the functional equation
\begin{equation}
\pi^{-s}\Gamma\left(s\right)\zeta_{k}(s)=\pi^{s-\frac{k}{2}}\Gamma\left(\frac{k}{2}-s\right)\zeta_{k}\left(\frac{k}{2}-s\right).\label{functional equation zetak}
\end{equation}
As noted in the Introduction of this paper, $r_{1}(n)=2$ if $n$ is a perfect
square and zero otherwise. Therefore, (\ref{definition zeta k}) reduces
to the Riemann $\zeta-$function, i.e., 
\begin{equation}
\zeta_{1}(s):=\sum_{n=1}^{\infty}\frac{r_{1}(n)}{n^{s}}=2\sum_{n=1}^{\infty}\frac{1}{n^{2s}}=2\zeta(2s),\,\,\,\,\text{Re}(s)>\frac{1}{2}.\label{zeta 1(s) definition}
\end{equation}
Furthermore, (\ref{functional equation zetak}) with $k=1$ gives the functional equation
for Riemann's $\zeta-$function
\begin{equation}
\pi^{-s}\Gamma\left(s\right)\zeta(2s)=\pi^{s-\frac{1}{2}}\Gamma\left(\frac{1}{2}-s\right)\zeta\left(1-2s\right).\label{Functional equation Riemann Popov}
\end{equation}
Like the functional equation for the Riemann zeta-function (\ref{Functional equation Riemann Popov}), there is a very symmetrical way to write (\ref{functional equation zetak}) as 
\begin{equation}
\eta_{k}(s) = \eta_{k}\left(\frac{k}{2}-s\right),\,\,\,\,\text{with }\,\eta_{k}(s):=\pi^{-s}\,\Gamma(s)\,\zeta_{k}(s). \label{def eta_k}
\end{equation}

In several occasions throughout this paper, 
we shall need to estimate the asymptotic order of $\zeta_{k}(s)$ resulting from the functional equation (\ref{functional equation zetak}). To justify most of the steps, we will often
invoke the following version of Stirling's formula 
\begin{equation}
\Gamma(\sigma+it)=(2\pi)^{\frac{1}{2}}\,t^{\sigma+it-\frac{1}{2}}\,e^{-\frac{\pi t}{2}-it+\frac{i\pi}{2}(\sigma-\frac{1}{2})}\left(1+\frac{1}{12(\sigma+it)}+O\left(\frac{1}{t^{2}}\right)\right),\label{Stirling exact form on Introduction}
\end{equation}
as $t\rightarrow\infty$, uniformly for $-\infty<\sigma_{1}\leq\sigma\leq\sigma_{2}<\infty$.
A similar formula can be written for $t$ tending to $-\infty$
by using the fact that $\Gamma(\overline{s})=\overline{\Gamma(s)}$. 

Like $\zeta(s)$, the Dirichlet series $\zeta_{k}(s)$ admits a Laurent expansion containing terms with arithmetical interest. 
The most important result of this section consists in finding the meromorphic expansion for $\zeta_{k}(s)$. Our proof will use a beautiful identity due to Popov [\cite{popov_formula_bessel}, eq. (6)] (cf. [\cite{sum_of_squares_berndt},
p. 329, Corollary 4.6.])
%
\begin{equation}
\frac{\beta ^{\nu /2}\Gamma \left (\nu +\frac{k}{2}\right )}{2\pi ^{\nu +\frac{k}{2}}}
\,\sum _{n=0}^{\infty}\frac{r_{k}(n)}{(n+\beta )^{\nu +\frac{k}{2}}}=\frac{\Gamma(\nu)}{2\pi^{\nu}\beta^{\nu/2}}+
\sum _{n=1}^{\infty}r_{k}(n)\,n^{\nu /2}K_{\nu}\left (2\pi \sqrt{n
\beta}\right ),
\label{ferrar_berndt_sum_of_squares_VTEX1}
\end{equation}
which is valid for any positive integer $k\geq 1$ and
$\text{Re}(\sqrt{\beta}),\,\text{Re}(\nu )>0$. The case $k=1$ of the previous formula was established by Watson
\cite{watson_reciprocal}, while studying self-reciprocal functions under the Hankel transform. This elegant identity of Watson, which formed a cornerstone of the techniques developed throughout my PhD journey, states that 
\begin{equation}
\sum_{n\in\mathbb{Z}}\frac{1}{\left(n^{2}+x^{2}\right)^{\nu}}=\frac{\sqrt{\pi}\,x^{1-2\nu}}{\Gamma(\nu)}\,\Gamma\left(\nu-\frac{1}{2}\right)+\frac{4\pi^{\nu}x^{\frac{1}{2}-\nu}}{\Gamma(\nu)}\,\sum_{n=1}^{\infty}n^{\nu-\frac{1}{2}}K_{\nu-\frac{1}{2}}\left(2\pi nx\right), \label{watson_intro_book}
\end{equation}
for any $x>0$ and $\text{Re}(\nu)>\frac{1}{2}$. Ferrar clearly knew this formula of Watson, as he and Dixon [\cite{dixon_ferrar_circle(i)}, p. 51] mimic one of the arguments given in \cite{watson_reciprocal} to study the analytical continuation of an infinite series involving the arithmetical function $r_{2}(n)$. Moreover, as the identity (\ref{III-106}) already suggests, Dixon and Ferrar also knew a particular case of (\ref{ferrar_berndt_sum_of_squares_VTEX1}) for $k=2$. 

We now state the generalization of Kronecker's
limit formula due to Epstein [\cite{epstein_I},
p. 644] as the next lemma. Although a proof of the forthcoming expansion is
already provided in Terras' paper [\cite{terras_epstein}, p. 485] (cf. [\cite{Siegel_Analytic Number Theory}, p. 22]), our notation and method
are slightly different and we rely on Popov's formula (\ref{ferrar_berndt_sum_of_squares_VTEX1}). Before proceeding, however, an important remark is in order. Throughout the introduction, we have emphasized the central role of the Mellin transform. It is therefore natural to ask whether the identity (\ref{ferrar_berndt_sum_of_squares_VTEX1}), upon which our argument depends, may itself be derived within this framework.
In the classical literature, formula (\ref{ferrar_berndt_sum_of_squares_VTEX1}) is typically obtained via the Vorono\"i summation formula, or alternatively as a consequence of the theta transformation formula (\ref{transf formula intro theta book}). However, as is already implicit in the work of Berndt \cite{dirichlet and hecke, dirichletserisIII}, a complete derivation may also be achieved through the use of Mellin–Barnes integral representations for the Bessel function $K_{\nu}(x)$.
This observation is fully consistent with the guiding principle adopted in this paper: that summation formulas, together with their associated transformations, may be systematically understood through the analytic machinery of the Mellin transform.

\begin{lemma}\label{continuation Epstein a la Selbegr chowla}
Let $k\in\mathbb{N}$ and $\zeta_{k}(s)$ be the Dirichlet series
attached to the sum of $k$ squares. Moreover, for a vector $\mathbf{x}\in\mathbb{R}^{n}$,
let $|\mathbf{x}|$ denote its Euclidean norm. Then $\zeta_{k}(s)$
admits the following Laurent expansion around its simple pole $s=\frac{k}{2}$,
\begin{align}
\zeta_{k}(s) & =\frac{\pi^{\frac{k}{2}}}{\Gamma\left(\frac{k}{2}\right)}\,\frac{1}{s-\frac{k}{2}}+\frac{\pi^{\frac{k}{2}}}{\Gamma\left(\frac{k}{2}\right)}\left(\gamma-2\log(2)-\psi\left(\frac{k}{2}\right)-2\log\left\{ \prod_{\mathbf{m}\in\mathbb{Z}^{k-1}\setminus\{\mathbf{0}\}}\left(1-e^{-2\pi|\mathbf{m}|}\right)\right\} \right)\nonumber \\
 & +\zeta_{k-1}\left(\frac{k}{2}\right) +O\left(s-\frac{k}{2}\right),\label{Laurent Expansion zeta k at s=00003Dk/2}
\end{align}
where $\psi(z)$ denotes Euler's digamma function, 
\[
\psi(z):=\frac{\Gamma^{\prime}(z)}{\Gamma(z)}.
\]
\end{lemma}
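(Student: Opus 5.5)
The plan is to use the classical Chowla--Selberg splitting. Write $r_{k}(n)$ as a sum over lattice points $(n_{1},\mathbf{m})\in\mathbb{Z}\times\mathbb{Z}^{k-1}$. Then
\[
\zeta_{k}(s)=\sum_{(n_1,\mathbf{m})\neq \mathbf{0}}\frac{1}{(n_{1}^{2}+|\mathbf{m}|^{2})^{s}}=2\zeta(2s)+\sum_{\mathbf{m}\in\mathbb{Z}^{k-1}\setminus\{\mathbf{0}\}}\,\sum_{n_{1}\in\mathbb{Z}}\frac{1}{(n_{1}^{2}+|\mathbf{m}|^{2})^{s}}.
\]
The terms with $\mathbf{m}=\mathbf{0}$ give $2\zeta(2s)=\zeta_{1}(s)$. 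To each inner sum I apply Watson's formula (\ref{watson_intro_book}) with $\nu=s$ and $x=|\mathbf{m}|$; equivalently, I use Popov's formula (\ref{ferrar_berndt_sum_of_squares_VTEX1}) with $k=1$. Summing over $\mathbf{m}$ and regrouping by $j=|\mathbf{m}|^{2}$ gives
\[
\zeta_{k}(s)=2\zeta(2s)+\frac{\sqrt{\pi}\,\Gamma\left(s-\frac12\right)}{\Gamma(s)}\,\zeta_{k-1}\left(s-\tfrac12\right)+\frac{4\pi^{s}}{\Gamma(s)}\sum_{\mathbf{m}\neq\mathbf{0}}\sum_{n=1}^{\infty}\left(\frac{n}{|\mathbf{m}|}\right)^{s-\frac12}K_{s-\frac12}\left(2\pi n|\mathbf{m}|\right).
\]
I first prove this for $\text{Re}(s)>k/2$, where everything converges absolutely. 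The double Bessel series converges absolutely and locally uniformly for all $s$, because $K_{\nu}$ decays exponentially. So the identity gives the meromorphic continuation, and the only pole near $s=k/2$ comes from $\zeta_{k-1}(s-\frac12)$, whose simple pole is at $s-\frac12=\frac{k-1}{2}$. (For $k=1$ there is no lattice $\mathbb{Z}^{0}\setminus\{\mathbf{0}\}$; the statement then reduces to the known Laurent expansion of $2\zeta(2s)$ at $s=\frac12$, and I handle it separately.)

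Next I expand each piece at $s=k/2$ for $k\geq 2$. The first term contributes the constant $2\zeta(k)$. I will then have to reconcile it with the claimed $\zeta_{k-1}(k/2)$. This suggests splitting off differently: keep the full row with $n_{1}$-sum containing $\mathbf{m}=\mathbf{0}$, or split with respect to the last coordinate instead. Concretely, I will split so that the $n_{1}=0$ slice is isolated. That is, $\zeta_{k}(s)=\zeta_{k-1}(s)+\sum_{n_{1}\neq0}\sum_{\mathbf{m}\in\mathbb{Z}^{k-1}}(n_1^2+|\mathbf{m}|^2)^{-s}$. I apply Poisson/Popov in the $\mathbf{m}$ variable with dimension $k-1$, i.e. formula (\ref{ferrar_berndt_sum_of_squares_VTEX1}) with $k-1$ and $\beta=n_{1}^{2}$. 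The first piece then gives exactly $\zeta_{k-1}(k/2)$ as its constant term. The residual piece of Popov's formula produces $\pi^{\frac{k-1}{2}}\Gamma(s-\frac{k-1}{2})\Gamma(s)^{-1}\cdot 2\zeta(2s-k+1)$. This has a simple pole at $s=k/2$ from $\zeta(2s-k+1)$, with residue $\pi^{k/2}/\Gamma(k/2)$, using $\Gamma(\frac12)=\sqrt\pi$. Its constant term is obtained from $\zeta(1+2\epsilon)=\frac{1}{2\epsilon}+\gamma+O(\epsilon)$, the digamma expansions $\Gamma(\frac12+\epsilon)/\Gamma(\frac k2+\epsilon)$, and $\psi(\frac12)=-\gamma-2\log 2$. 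Together these yield $\frac{\pi^{k/2}}{\Gamma(k/2)}\left(\gamma-2\log2-\psi(\frac k2)\right)$, after the $\pi^{\epsilon}$ factor contributes $\log\pi$ terms that must cancel; I will check that bookkeeping.

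The remaining Bessel double sum is evaluated at $s=k/2$, where $\nu=s-\frac{k-1}{2}=\frac12$ and $K_{1/2}(z)=\sqrt{\pi/(2z)}\,e^{-z}$ is elementary. The sum then collapses to
\[
\frac{\pi^{k/2}}{\Gamma(k/2)}\sum_{\mathbf{m}\neq\mathbf{0}}\sum_{n\geq1}\frac{2e^{-2\pi n|\mathbf{m}|}}{n}=-\frac{2\pi^{k/2}}{\Gamma(k/2)}\log\prod_{\mathbf{m}\neq\mathbf{0}}\left(1-e^{-2\pi|\mathbf{m}|}\right),
\]
which matches the claim. The main obstacle is the bookkeeping of constants: getting the normalization of (\ref{ferrar_berndt_sum_of_squares_VTEX1}) right in dimension $k-1$, and the cancellation of $\log\pi$ terms in the residual expansion. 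I also have to justify interchanging the $n_{1}$ and Bessel sums uniformly near $s=k/2$, which follows from exponential decay of $K_\nu$.
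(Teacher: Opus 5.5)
Your final decomposition is exactly the paper's proof. The paper also isolates the slice where one coordinate vanishes to obtain $\zeta_{k-1}(s)$, applies Popov's formula (\ref{ferrar_berndt_sum_of_squares_VTEX1}) in dimension $k-1$ with $\beta=n_1^2$, expands $2\pi^{\frac{k-1}{2}}\Gamma\left(s-\frac{k-1}{2}\right)\Gamma(s)^{-1}\zeta(2s-k+1)$ at $s=\frac{k}{2}$, and evaluates the Bessel sum at $\nu=\frac12$; the constants work out as you claim. The only slip is your worry about $\log\pi$ terms: the residual term carries the fixed factor $\pi^{\frac{k-1}{2}}$ (as your own formula shows), so there is no $\pi^{\epsilon}$ and nothing needs to cancel.
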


\begin{proof}[Proof of Lemma \ref{continuation Epstein a la Selbegr chowla}]
Let us note that, for $k\geq 2$ and $\text{Re}(s)>\frac{k}{2}$, $\zeta_{k}(s)$ may be represented in the following form
\begin{align}
\zeta_{k}(s) & =\sum_{n_{1},...,n_{k}\neq0}\,\frac{1}{\left(n_{1}^{2}+...+n_{k-1}^{2}+n_{k}^{2}\right)^{s}}=2\sum_{m,n\neq 0}^{\infty}\frac{r_{k-1}(m)}{(m+n^2)^{s}}\nonumber\\
 & =2\,\zeta_{k-1}(s)+2\,\sum_{n=1}^{\infty}\sum_{m=0}^{\infty}\frac{r_{k-1}(m)}{\left(m+n^{2}\right)^{s}}. \label{zetak ad Z2 RHUM}
\end{align}
In order to treat the infinite series with respect to the index $m$, we can use Popov's formula (\ref{ferrar_berndt_sum_of_squares_VTEX1}) with $\beta=m^2$: applying it gives the following expression for $\zeta_{k}(s)$, 
\begin{align}
\zeta_{k}(s) & =\zeta_{k-1}(s)+\frac{2\pi^{\frac{k-1}{2}}}{\Gamma(s)}\,\Gamma\left(s-\frac{k-1}{2}\right)\zeta\left(2s-k+1\right)\nonumber \\
 & +\frac{4\pi^{s}}{\Gamma(s)}\,\sum_{m,n=1}^{\infty}r_{k-1}(m)\left(\frac{m}{n^{2}}\right)^{\frac{s}{2}-\frac{k-1}{4}}\,K_{s-\frac{k-1}{2}}\left(2\pi\sqrt{m}\,n\right).\label{Application Selberg Chowla to zetak}
\end{align}
Let us study the right-hand side of this representation around the
point $s=\frac{k}{2}$. Recalling the definition of the digamma function, $\psi(z):=\Gamma^{\prime}(z)/\Gamma(z)$, 
and using the well-known Laurent expansions,
\[
\frac{1}{\Gamma(s)}=\frac{1}{\Gamma\left(\frac{k}{2}\right)}\left(1-\psi\left(\frac{k}{2}\right)\left(s-\frac{k}{2}\right)+O\left(\left(s-\frac{k}{2}\right)^{2}\right)\right),
\]
\[
\Gamma\left(s-\frac{k-1}{2}\right)=\sqrt{\pi}\left(1-\left(2\log(2)+\gamma\right)\left(s-\frac{k}{2}\right)+O\left(\left(s-\frac{k}{2}\right)^{2}\right)\right),
\]
\[
\zeta\left(2s-k+1\right)=\frac{1}{2s-k}+\gamma+O\left(s-\frac{k}{2}\right),
\]
we have 
\begin{align}
\frac{2\pi^{\frac{k-1}{2}}}{\Gamma(s)}\,\Gamma\left(s-\frac{k-1}{2}\right)\zeta\left(2s-k+1\right)\nonumber \\
=\frac{\pi^{\frac{k}{2}}}{\Gamma\left(\frac{k}{2}\right)}\,\frac{1}{s-\frac{k}{2}}+\frac{\pi^{\frac{k}{2}}}{\Gamma\left(\frac{k}{2}\right)}\left(\gamma-2\log(2)-\psi\left(\frac{k}{2}\right)\right) & +O\left(s-\frac{k}{2}\right).\label{Development first residual term selberg chowla in RHUM}
\end{align}
Since $K_{1/2}(x)=\sqrt{\frac{\pi}{2x}}\,e^{-x}$, the series involving
the Modified Bessel function in (\ref{Application Selberg Chowla to zetak}) has the following value at
$s=\frac{k}{2}$,
\begin{align*}
\frac{4\pi^{\frac{k}{2}}}{\Gamma\left(\frac{k}{2}\right)}\,\sum_{m,n=1}^{\infty}r_{k-1}(m)\left(\frac{m}{n^{2}}\right)^{\frac{1}{4}}K_{\frac{1}{2}}\left(2\pi\sqrt{m}\,n\right) & =\frac{2\pi^{\frac{k}{2}}}{\Gamma\left(\frac{k}{2}\right)}\,\sum_{m,n=1}^{\infty}\frac{r_{k-1}(m)\,e^{-2\pi\sqrt{m}\,n}}{n}\\
=\frac{2\pi^{\frac{k}{2}}}{\Gamma\left(\frac{k}{2}\right)}\,\sum_{m=1}^{\infty}r_{k-1}(m)\,\sum_{n=1}^{\infty}\frac{e^{-2\pi\sqrt{m}n}}{n} & =-\frac{2\pi^{\frac{k}{2}}}{\Gamma\left(\frac{k}{2}\right)}\,\sum_{m=1}^{\infty}r_{k-1}(m)\,\log\left(1-e^{-2\pi\sqrt{m}}\right).
\end{align*}
Using the definition of $r_{k-1}(m)$ and writing the last series
as multiple sum over $k-1$ variables of summation, we arrive at the expression
\begin{align}
&-\frac{2\pi^{\frac{k}{2}}}{\Gamma\left(\frac{k}{2}\right)}\,\sum_{m=1}^{\infty}r_{k-1}(m)\,\log\left(1-e^{-2\pi\sqrt{m}}\right) =-\frac{2\pi^{\frac{k}{2}}}{\Gamma\left(\frac{k}{2}\right)}\,\sum_{m_{1},...,m_{k-1}\neq0}\log\left(1-e^{-2\pi\sqrt{m_{1}^{2}+...+m_{k-1}^{2}}}\right)\nonumber \\
=&-\frac{2\pi^{\frac{k}{2}}}{\Gamma\left(\frac{k}{2}\right)}\,\log\left\{ \prod_{m_{1},...,m_{k}\neq0}\left(1-e^{-2\pi\sqrt{m_{1}^{2}+...+m_{k-1}^{2}}}\right)\right\} =-\frac{2\pi^{\frac{k}{2}}}{\Gamma\left(\frac{k}{2}\right)}\,\log\left\{ \prod_{\mathbf{m}\in\mathbb{Z}^{k-1}\setminus\{\mathbf{0}\}}\left(1-e^{-2\pi|\mathbf{m}|}\right)\right\} ,\label{Final expression dedekind eta almost at the end in fact}
\end{align}
where $\mathbf{x}\in\mathbb{R}^{n}$ and $|\mathbf{x}|$ denotes the usual
Euclidean norm of the vector $\mathbf{x}$. Combining (\ref{Final expression dedekind eta almost at the end in fact})
with (\ref{Development first residual term selberg chowla in RHUM})
and returning to the Selberg-Chowla formula (\ref{Application Selberg Chowla to zetak}),
we see that $\zeta_{k}(s)$ admits the Laurent expansion, 
\begin{align*}
\zeta_{k}(s) & =\frac{\pi^{\frac{k}{2}}}{\Gamma\left(\frac{k}{2}\right)}\,\frac{1}{s-\frac{k}{2}}+\frac{\pi^{\frac{k}{2}}}{\Gamma\left(\frac{k}{2}\right)}\left(\gamma-2\log(2)-\psi\left(\frac{k}{2}\right)-2\log\left\{ \prod_{\mathbf{m}\in\mathbb{Z}^{k-1}\setminus\{\mathbf{0}\}}\left(1-e^{-2\pi|\mathbf{m}|}\right)\right\} \right)\\
 &+\zeta_{k-1}\left(\frac{k}{2}\right) +O\left(s-\frac{k}{2}\right),
\end{align*}
which is exactly (\ref{Laurent Expansion zeta k at s=00003Dk/2}).
\end{proof}


\begin{remark}\label{analogues infinite product lemma}
The infinite product
\begin{equation}
\Phi_{k}(y):=\prod_{\mathbf{m}\in\mathbb{Z}^{k-1}\setminus\{\mathbf{0}\}}\left(1-e^{-2\pi|\mathbf{m}|y}\right),\,\,\,\,y>0,\label{generalization Dedekind}
\end{equation}
can be considered as an ``analogue'' of the square of Dedekind's
$\eta-$function, 
\begin{equation}
\eta(\tau):=e^{\frac{\pi i\tau}{12}}\,\prod_{m=1}^{\infty}\left(1-e^{2\pi im\tau}\right),\,\,\,\,\text{Im}(\tau)>0,\label{dedekind definition" RHUUUUM}
\end{equation}
when restricted to the imaginary axis $\tau=iy,$ $y>0$. To see why,
let us note that, when $k=2$, $\Phi_{k}(y)$ reduces to
\begin{equation}
\Phi_{2}(y):=\prod_{m\in\mathbb{Z}\setminus\{0\}}\left(1-e^{-2\pi|m|y}\right)=\left\{ \prod_{m=1}^{\infty}\left(1-e^{-2\pi my}\right)\right\} ^{2}:=e^{\frac{\pi y}{6}}\,\eta\left(iy\right)^{2}.\label{Phi 2 and connection to eta Dedekind}
\end{equation}
Therefore, as expected, (\ref{Laurent Expansion zeta k at s=00003Dk/2})
must imply a particular case of Kronecker's limit formula, whose classical
form arises when $k=2$. Indeed, using the fact that $\psi(1)=-\gamma$,
we have from (\ref{Laurent Expansion zeta k at s=00003Dk/2}) and
(\ref{Phi 2 and connection to eta Dedekind}) that 
\begin{equation*}
\zeta_{2}(s) =\frac{\pi}{s-1}+\zeta_{1}\left(1\right)+\pi\left(2\gamma-2\log(2)-4\log\left(\eta(i)\right)-\frac{\pi}{3}\right) +O\left(s-1\right).
\end{equation*}
But it is also well-known (cf. (\ref{Basel Intro}) above) that $\zeta_{1}(1):=2\zeta(2)=\frac{\pi^{2}}{3}$
and so, after elementary simplifications, we get the Laurent expansion
\begin{equation}
\zeta_{2}(s)=\frac{\pi}{s-1}+\pi\left(2\gamma-2\log(2)-4\log\left(\eta(i)\right)\right)+O(s-1),\label{Kronecker's limit formula for the partiuclar case diagonal quadratic}
\end{equation}
which is none other than Kronecker's famous result. 
\end{remark}

\begin{remark}\label{trivial remark when k=1}
When $k=1$, we know that neither $\zeta_{k-1}(s)$ nor $\Phi_{k}(1):=\prod_{\mathbf{m}\in\mathbb{Z}^{k-1}\setminus\{\mathbf{0}\}}\left(1-e^{-2\pi|\mathbf{m}|}\right)$
make much sense (besides, our proof below uses a formula, (\ref{zetak ad Z2 RHUM}), that only makes sense for $k\geq 2$). Therefore, in order to (\ref{Laurent Expansion zeta k at s=00003Dk/2}) be valid for any $k\in \mathbb{N}$, we take the convention
\begin{equation}
\zeta_{0}(s):=0,\,\,\,\,\Phi_{1}(y):=1. \label{silly convention but important}
\end{equation}
Recalling that $\zeta_{1}(s)=2\zeta(2s)$ and $\psi\left(\frac{1}{2}\right)=-2\log(2)-\gamma$,
a particular case of (\ref{Laurent Expansion zeta k at s=00003Dk/2}) under the convention (\ref{silly convention but important})
is
\[
2\zeta(2s)=\frac{1}{s-\frac{1}{2}}+2\gamma+O\left(s-\frac{1}{2}\right),
\]
which is the well-known Laurent expansion for Riemann's zeta function.
\end{remark}

\begin{remark}
There are some cases where the meromorphic expansion can be rewriten
in simpler terms. Consider, for instance, the case $k=2$. By Legendre's
two-square theorem, $\zeta_{2}(s)$ can be expressed as the product
\[
\zeta_{2}(s)=4\zeta(s)\,L(s,\chi_{4}),
\]
where, for $\text{Re}(s)>1$, $L(s,\chi_{4})=\sum_{k=0}^{\infty}\frac{(-1)^{k}}{(2k+1)^{s}}$
is the usual Dirichlet beta function. It is clear from this expression
that 
\[
\zeta_{2}(s)=\frac{\pi}{s-1}+\pi\gamma+4L^{\prime}\left(1,\chi_{4}\right)+O(s-1).
\]
However, it is well-known that $L^{\prime}(1,\chi_{4})$ can be expressed
in the following form\footnote{see, for instance {[}\cite{vardi}, p. 312{]} for a beautiful proof
of (\ref{L'(1,x) formula}) using the Bohr-Mollerup Theorem.}
\begin{equation}
L^{\prime}\left(1,\chi_{4}\right)=\frac{\gamma}{4}\pi+\frac{\pi}{2}\,\log\left(\frac{2\pi^{\frac{3}{2}}}{\Gamma^{2}\left(\frac{1}{4}\right)}\right),\label{L'(1,x) formula}
\end{equation}
showing the alternative expression for the Laurent series for $\zeta_{2}(s)$,
\begin{equation}
\zeta_{2}(s)=\frac{\pi}{s-1}+2\pi\gamma+2\pi\,\log\left(\frac{2\pi^{\frac{3}{2}}}{\Gamma^{2}\left(\frac{1}{4}\right)}\right)+O(s-1).\label{Meromorphif zeta2(s) with log}
\end{equation}
Alternatively, in order to prove (\ref{Meromorphif zeta2(s) with log})
we can also use the well-known evaluation of Dedekind's $\eta$-function,
\begin{equation}
\eta(i)=\frac{\Gamma\left(\frac{1}{4}\right)}{2\pi^{\frac{3}{4}}},\label{eta(i) ad Remarrrrk}
\end{equation}
which is usually a consequence of Euler's pentagonal theorem.

When $k=4$, an analogous situation takes place. By Jacobi's 4-square
theorem, it is possible to express $\zeta_{4}(s)$ in the following
form
\[
\zeta_{4}(s)=8\left(1-2^{2-2s}\right)\zeta(s)\,\zeta(s-1),
\]
which implies that 
\[
\zeta_{4}(s)=\frac{\pi^{2}}{s-2}+\pi^{2}\gamma+6\zeta^{\prime}(2)+\frac{2}{3}\pi^{2}\log(2)+O(s-2).
\]
On the other hand, we may simplify the previous expansion by invoking a known expression for $\zeta^{\prime}(2)$, i.e., 
\[
\zeta^{\prime}(2)=\frac{\pi^{2}}{6}\left(\gamma+\log(2\pi)-12\log(A)\right),
\]
where $A$ denotes the Glaisher-Kinkelin constant
\begin{equation}
A=\lim_{n\rightarrow\infty}\frac{\prod_{j=1}^{n}j^{j}}{n^{\frac{n^{2}}{2}+\frac{n}{2}+\frac{1}{12}}e^{-\frac{n^{2}}{4}}}. \label{glaisher}
\end{equation}
Thus, one has an alternative Laurent expansion for $\zeta_{4}(s)$ of the form
\begin{equation}
\zeta_{4}(s)=\frac{\pi^{2}}{s-2}+\pi^{2}\left(2\gamma+\log(2\pi)-12\log(A)+\frac{2}{3}\log(2)\right)+O(s-2).\label{Second expression Laurent expansion}
\end{equation}
Comparing (\ref{Second expression Laurent expansion}) and (\ref{Laurent Expansion zeta k at s=00003Dk/2}) with
$k=4$ gives the curious formula 
\begin{equation}
\prod_{(m_{1},m_{2},m_{3})\neq\mathbf{0}}\left(1-e^{-2\pi\sqrt{m_{1}^{2}+m_{2}^{2}+m_{3}^{2}}}\right)=\frac{\exp\left(\frac{\zeta_{3}(2)}{2\pi^{2}}\right)}{2^{\frac{4}{3}}\sqrt{2\pi e}}A^{6}.\label{indeed be an analogue}
\end{equation}
Of course, if it was possible to express $\zeta_{3}(2)$ in terms
of classical constants, we could expect that (\ref{indeed be an analogue})
would indeed be an analogue of the evaluation of $\eta(i)$, (\ref{eta(i) ad Remarrrrk}).

\end{remark}

\section{Proof of Theorem \ref{gen Ferrar paper Petro}}

We start by noting that, in order to prove (\ref{Formula GENERAL Ferrar!}), it suffices to prove the slightly simpler summation formula
\begin{align}
x^{\frac{k}{4}-\frac{1}{2}}\,\sum_{n=1}^{\infty}r_{k}(n)\left\{ \frac{e^{\frac{\pi nx}{2}}}{\sqrt{n}}W_{\frac{1-k}{2},0}\left(\pi nx\right)-\frac{(\pi x)^{\frac{1-k}{2}}}{n^{\frac{k}{2}}}\right\} +\pi^{\frac{1-k}{2}}\,x^{-\frac{k}{4}}\zeta_{k-1}\left(\frac{k}{2}\right)\nonumber \\
+\frac{\sqrt{\pi}x^{-\frac{k}{4}}}{\Gamma\left(\frac{k}{2}\right)}\left(2\gamma+\psi\left(\frac{k}{2}\right)-2\log\left\{ \prod_{\mathbf{m}\in\mathbb{Z}^{k-1}\setminus\{\mathbf{0}\}}\left(1-e^{-2\pi|\mathbf{m}|}\right)\right\} -\log(4\pi x)\right)\nonumber \\
=x^{\frac{1}{2}-\frac{k}{4}}\,\sum_{n=1}^{\infty}r_{k}(n)\left\{ \frac{e^{\frac{\pi n}{2x}}}{\sqrt{n}}W_{\frac{1-k}{2},0}\left(\frac{\pi n}{x}\right)-\left(\frac{\pi}{x}\right)^{\frac{1-k}{2}}\frac{1}{n^{\frac{k}{2}}}\right\} +\pi^{\frac{1-k}{2}}x^{\frac{k}{4}}\zeta_{k-1}\left(\frac{k}{2}\right)\nonumber \\
+\frac{\sqrt{\pi}\,x^{\frac{k}{4}}}{\Gamma\left(\frac{k}{2}\right)}\left(2\gamma+\psi\left(\frac{k}{2}\right)-2\log\left\{ \prod_{\mathbf{m}\in\mathbb{Z}^{k-1}\setminus\{\mathbf{0}\}}\left(1-e^{-2\pi|\mathbf{m}|}\right)\right\} -\log\left(\frac{4\pi}{x}\right)\right),\label{Theorem generalized Ferrar with}
\end{align}
where $x>0$. Clearly, (\ref{Theorem generalized Ferrar with}) is equivalent to (\ref{Formula GENERAL Ferrar!}) when we consider $x=\alpha^{2}$.  
Thus, we may start our argument by considering the infinite series
\begin{equation}
\sum_{n=1}^{\infty}r_{k}(n)\left\{ \frac{e^{\frac{\pi nx}{2}}}{\sqrt{n}}W_{\frac{1-k}{2},0}\left(\pi nx\right)-\frac{(\pi x)^{\frac{1-k}{2}}}{n^{\frac{k}{2}}}\right\}. \label{series at the front}
\end{equation}
Since $r_{k}(n)=O\left(n^{\frac{k}{2}-1}\right)$ and $W_{-\rho,\sigma}(x)$,
$\rho,\sigma\in\mathbb{R}$, has the asymptotic behavior (cf. (\ref{asymp behavior whiutttttakkkkkeeeer rhum}) above)
\[
W_{-\rho,\nu}(x)=x^{-\rho}e^{-\frac{x}{2}}\left\{ 1+O\left(\frac{1}{x}\right)\right\} ,\,\,\,\,\,x\rightarrow\infty,
\]
we can easily conclude the absolute convergence of (\ref{series at the front}). In order to study this first series, we shall consider the useful integral representation {[}\cite{handbook_marichev}, p. 458, relation
(3.30.2.5){]}
\begin{equation}
e^{x/2}W_{-\rho,\sigma}(x)=\frac{1}{2\pi i}\,\intop_{\mu-i\infty}^{\mu+i\infty}\frac{\Gamma\left(s-\sigma+\frac{1}{2}\right)\Gamma\left(s+\sigma+\frac{1}{2}\right)\Gamma\left(-s+\rho\right)}{\Gamma\left(\frac{1}{2}+\rho-\sigma\right)\Gamma\left(\frac{1}{2}+\rho+\sigma\right)}\,x^{-s}ds,\label{formula marichev exp times whittaker in proof of Ferrar!}
\end{equation}
where $|\text{Re}(\sigma)|-\frac{1}{2}<\mu<\text{Re}(\rho)$. If we
shift the line of integration from this range to $\text{Re}(\rho)<\mu^{\prime}<1+\text{Re}(\rho)$,
we find that the integrand in (\ref{formula marichev exp times whittaker in proof of Ferrar!}) has a simple pole located at $s=\rho$.
Thus, by the residue theorem,\footnote{the residue theorem can be trivially applied due to the asymptotic behavior of the integrand with respect to $\text{Im}(s)$, as $|\text{Im}(s)|\rightarrow \infty$. The justification made at (\ref{bound bound bound to justify proof}) is also applicable here.} we have
\begin{equation}
e^{x/2}W_{-\rho,\sigma}(x)-x^{-\rho}=\frac{1}{2\pi i}\,\intop_{\mu^{\prime}-i\infty}^{\mu^{\prime}+i\infty}\frac{\Gamma\left(s-\sigma+\frac{1}{2}\right)\Gamma\left(s+\sigma+\frac{1}{2}\right)\Gamma\left(-s+\rho\right)}{\Gamma\left(\frac{1}{2}+\rho-\sigma\right)\Gamma\left(\frac{1}{2}+\rho+\sigma\right)}\,x^{-s}ds.\label{Mellin barnes integral WHIIITTTAKKKER}
\end{equation}
Taking $\sigma=0$ and $\rho=\frac{k-1}{2}$,
we have that (\ref{Mellin barnes integral WHIIITTTAKKKER}) is valid
in the region $\frac{k-1}{2}<\mu^{\prime}<\frac{k+1}{2}$ and so,
by absolute convergence, the following integral representation holds
\begin{align}
&\sum_{n=1}^{\infty}r_{k}(n)\left\{ \frac{e^{\frac{\pi nx}{2}}}{\sqrt{n}}W_{\frac{1-k}{2},0}\left(\pi nx\right)-\frac{(\pi x)^{\frac{1-k}{2}}}{n^{\frac{k}{2}}}\right\}\nonumber\\
&=\frac{1}{2\pi i}\,\intop_{\mu^{\prime}-i\infty}^{\mu^{\prime}+i\infty}\frac{\Gamma^{2}\left(s+\frac{1}{2}\right)\Gamma\left(\frac{k-1}{2}-s\right)}{\Gamma^{2}\left(\frac{k}{2}\right)}\zeta_{k}\left(s+\frac{1}{2}\right)\,(\pi x)^{-s}ds.\label{starting point Mellin barnes ferrar formula in Rhum paper}
\end{align}
We shift the line of integration in (\ref{starting point Mellin barnes ferrar formula in Rhum paper}) from $\text{Re}(s)=\mu^{\prime}$
to $\text{Re}(s)=\frac{k}{2}-\mu^{\prime}$, taking an integration along a positively oriented rectangular contour, $\mathscr{R}_{\mu^{\prime}}(T)$, with vertices $\mu^{\prime}\pm iT$ and $\frac{k}{2}-\mu^{\prime}\pm iT$, $T>0$. By the Phragm\'en-Lindel\"of principle \cite{titchmarsh_theory_of_functions}, we know that,
for any $\delta>0$, $\zeta_{k}(\sigma+it)\ll_{\delta}|t|^{A(\sigma)+\delta}$,
where $A(\sigma)$ is the function defined by
\begin{equation}
A(\sigma)=\begin{cases}
0 & \sigma>\frac{k}{2}+\delta,\\
\frac{k}{2}-\sigma & -\delta\leq\sigma\leq\frac{k}{2}+\delta,\\
\frac{k}{2}-2\sigma & \sigma<-\delta.
\end{cases} \label{convex estimates cases Popov paper zetak}
\end{equation}
Moreover, by Stirling's formula (\ref{Stirling exact form on Introduction}) and (\ref{convex estimates cases Popov paper zetak}), we know that the integrals along the horizontal segments of $\mathscr{R}_{\mu^{\prime}}(T)$ must satisfy the bound 
\begin{equation}
\intop_{\mu^{\prime}}^{\frac{k}{2}-\mu^{\prime}}\left|T^{A(\sigma)+\frac{k}{2}-1}e^{-\frac{3\pi}{2}T}\,(\pi x)^{-\sigma}\right|d\sigma\ll e^{-\frac{3\pi}{2}T}T^{\frac{k}{2}-1}, \label{bound bound bound to justify proof}
\end{equation}
becoming negligible as $T\rightarrow \infty$.
Since $\frac{k-1}{2}<\mu^{\prime}<\frac{k+1}{2}$
by hypothesis, we know that $\Gamma^{2}\left(s+\frac{1}{2}\right)$
contains no poles inside the rectangular contour $\mathscr{R}_{\mu^{\prime}}(T)$.
Moreover, since $\mu^{\prime}<\frac{k+1}{2}$, the only pole that
$\Gamma\left(\frac{k-1}{2}-s\right)$ contains inside $\mathscr{R}_{\mu^{\prime}}(T)$
is located at $s=\frac{k-1}{2}$ and this is the exact same pole
that $\zeta_{k}\left(s+\frac{1}{2}\right)$ possesses.
The computation of the residue at the point $s=\frac{k-1}{2}$ comes from the Laurent expansion
\[
\Gamma\left(\frac{k-1}{2}-s\right)=\frac{2}{k-1-2s}-\gamma+O\left(s-\frac{k-1}{2}\right),
\]
together with the generalized Kronecker limit formula obtained in
Lemma \ref{continuation Epstein a la Selbegr chowla} above. Straightforward simplifications yield
\begin{align}
&\text{Res}_{s=\frac{k-1}{2}}\left\{ \frac{\Gamma^{2}\left(s+\frac{1}{2}\right)\Gamma\left(\frac{k-1}{2}-s\right)}{\Gamma^{2}\left(\frac{k}{2}\right)}\zeta_{k}\left(s+\frac{1}{2}\right)\,(\pi x)^{-s}\right\}\nonumber \\
&=-\frac{\sqrt{\pi}x^{\frac{1-k}{2}}}{\Gamma\left(\frac{k}{2}\right)}\left(2\gamma+\psi\left(\frac{k}{2}\right)-2\log\left\{ \prod_{\mathbf{m}\in\mathbb{Z}^{k-1}\setminus\{\mathbf{0}\}}\left(1-e^{-2\pi|\mathbf{m}|}\right)\right\} -\log(4\pi x)\right)\nonumber\\
&-(\pi x)^{-\frac{k-1}{2}}\zeta_{k-1}\left(\frac{k}{2}\right).\label{residue in ferrar prof}
\end{align}
Hence, by Cauchy's residue theorem and (\ref{residue in ferrar prof}), we have the equality
\begin{align}
&\frac{1}{2\pi i}\,\intop_{\mu^{\prime}-i\infty}^{\mu^{\prime}+i\infty}\frac{\Gamma^{2}\left(s+\frac{1}{2}\right)\Gamma\left(\frac{k-1}{2}-s\right)}{\Gamma^{2}\left(\frac{k}{2}\right)}\zeta_{k}\left(s+\frac{1}{2}\right)\,(\pi x)^{-s}ds=-(\pi x)^{-\frac{k-1}{2}}\zeta_{k-1}\left(\frac{k}{2}\right)\nonumber\\
&-\frac{\sqrt{\pi}x^{\frac{1-k}{2}}}{\Gamma\left(\frac{k}{2}\right)}\left(2\gamma+\psi\left(\frac{k}{2}\right)-2\log\left\{ \prod_{\mathbf{m}\in\mathbb{Z}^{k-1}\setminus\{\mathbf{0}\}}\left(1-e^{-2\pi|\mathbf{m}|}\right)\right\} -\log(4\pi x)\right)\nonumber\\
&+\frac{1}{2\pi i}\,\intop_{\frac{k}{2}-\mu^{\prime}-i\infty}^{\frac{k}{2}-\mu^{\prime}+i\infty}\frac{\Gamma^{2}\left(s+\frac{1}{2}\right)\Gamma\left(\frac{k-1}{2}-s\right)}{\Gamma^{2}\left(\frac{k}{2}\right)}\zeta_{k}\left(s+\frac{1}{2}\right)\,(\pi x)^{-s}ds.\label{after cauchy proof ferrar general}
\end{align}
Our proof will be complete once we evaluate the last integral in (\ref{after cauchy proof ferrar general}). In order to do this, we appeal to the functional
equation for $\zeta_{k}(s)$, (\ref{functional equation zetak}), and make the change of variables $s\leftrightarrow\frac{k}{2}-s$, resulting in 
\begin{align}
&\frac{1}{2\pi i}\,\intop_{\frac{k}{2}-\mu^{\prime}-i\infty}^{\frac{k}{2}-\mu^{\prime}+i\infty}\frac{\Gamma^{2}\left(s+\frac{1}{2}\right)\Gamma\left(\frac{k-1}{2}-s\right)}{\Gamma^{2}\left(\frac{k}{2}\right)}\zeta_{k}\left(s+\frac{1}{2}\right)\,(\pi x)^{-s}ds\nonumber \\
&=\frac{\pi}{2\pi i}\,\intop_{\mu^{\prime}-i\infty}^{\mu^{\prime}+i\infty}\frac{\Gamma\left(\frac{k+1}{2}-s\right)\Gamma\left(s-\frac{1}{2}\right)}{\Gamma^{2}\left(\frac{k}{2}\right)}\pi^{-s}\,\Gamma\left(s-\frac{1}{2}\right)\zeta_{k}\left(s-\frac{1}{2}\right)\,x^{s-\frac{k}{2}}\,ds\nonumber \\
&=\frac{x^{1-\frac{k}{2}}}{2\pi i}\,\intop_{\mu^{\prime}-1-i\infty}^{\mu^{\prime}-1+i\infty}\frac{\Gamma^{2}\left(s+\frac{1}{2}\right)\Gamma\left(\frac{k-1}{2}-s\right)}{\Gamma^{2}\left(\frac{k}{2}\right)}\,\zeta_{k}\left(s+\frac{1}{2}\right)\left(\frac{\pi}{x}\right)^{-s}\,ds,\label{integral ferrar ginal evaluion}
\end{align}
where $\frac{k-3}{2}<\mu^{\prime}-1<\frac{k-1}{2}$. It would be highly
convenient to use (\ref{starting point Mellin barnes ferrar formula in Rhum paper})
in order to evaluate the last integral in (\ref{integral ferrar ginal evaluion}).
But to be able to do this requires that we shift the line of integration from
$\text{Re}(s)=\mu^{\prime}-1$ back to $\text{Re}(s)=\mu^{\prime}$. To make this procedure, we need to apply the residue theorem again. Using the calculations and technical justifications made in (\ref{residue in ferrar prof}), we easily find that 
\begin{align*}
&\text{Res}_{s=\frac{k-1}{2}}\left\{ \frac{\Gamma^{2}\left(s+\frac{1}{2}\right)\Gamma\left(\frac{k-1}{2}-s\right)}{\Gamma^{2}\left(\frac{k}{2}\right)}\,\zeta_{k}\left(s+\frac{1}{2}\right)\left(\frac{\pi}{x}\right)^{-s}\right\} \\
&=-\frac{\sqrt{\pi}x^{\frac{k-1}{2}}}{\Gamma\left(\frac{k}{2}\right)}\left(2\gamma+\psi\left(\frac{k}{2}\right)-2\log\left\{ \prod_{\mathbf{m}\in\mathbb{Z}^{k-1}\setminus\{\mathbf{0}\}}\left(1-e^{-2\pi|\mathbf{m}|}\right)\right\} -\log\left(\frac{4\pi}{x}\right)\right)\\
&-\left(\frac{\pi}{x}\right)^{-\frac{k-1}{2}}\zeta_{k-1}\left(\frac{k}{2}\right),
\end{align*}
and so the last member of (\ref{integral ferrar ginal evaluion}) can be written as 
\begin{align}
&\frac{x^{1-\frac{k}{2}}}{2\pi i}\,\intop_{\mu^{\prime}-1-i\infty}^{\mu^{\prime}-1+i\infty}\frac{\Gamma^{2}\left(s+\frac{1}{2}\right)\Gamma\left(\frac{k-1}{2}-s\right)}{\Gamma^{2}\left(\frac{k}{2}\right)}\,\zeta_{k}\left(s+\frac{1}{2}\right)\left(\frac{\pi}{x}\right)^{-s}\,ds\nonumber\\
&=\frac{x^{1-\frac{k}{2}}}{2\pi i}\,\intop_{\mu^{\prime}-i\infty}^{\mu^{\prime}+i\infty}\frac{\Gamma^{2}\left(s+\frac{1}{2}\right)\Gamma\left(\frac{k-1}{2}-s\right)}{\Gamma^{2}\left(\frac{k}{2}\right)}\,\zeta_{k}\left(s+\frac{1}{2}\right)\left(\frac{\pi}{x}\right)^{-s}\,ds+\sqrt{x}\pi^{\frac{1-k}{2}}\zeta_{k-1}\left(\frac{k}{2}\right)\nonumber\\
&+\frac{\sqrt{\pi x}}{\Gamma\left(\frac{k}{2}\right)}\left(2\gamma+\psi\left(\frac{k}{2}\right)-2\log\left\{ \prod_{\mathbf{m}\in\mathbb{Z}^{k-1}\setminus\{\mathbf{0}\}}\left(1-e^{-2\pi|\mathbf{m}|}\right)\right\} -\log\left(\frac{4\pi}{x}\right)\right).\label{final nail in the proof of ferrar general}
\end{align}
The summation formula (\ref{Theorem generalized Ferrar with}) now results from a combination of formulas (\ref{starting point Mellin barnes ferrar formula in Rhum paper})-(\ref{final nail in the proof of ferrar general}). $\blacksquare$

\begin{corollary}\label{corollary as ferrar once afgain}
Let $\alpha,\beta>0$ be such that $\alpha\beta=1$. Then Ferrar's formula (\ref{ferrar introduction section}) holds, this is, 
\begin{align}
2\sqrt{\alpha}\,\sum_{n=1}^{\infty}\left\{ e^{\frac{\pi n^{2}\alpha^{2}}{2}}K_{0}\left(\frac{\pi n^{2}\alpha^{2}}{2}\right)-\frac{1}{\alpha n}\right\} +\frac{1}{\sqrt{\alpha}}\left(\gamma-\log(16\pi)-2\log\left(\alpha\right)\right)\nonumber \\
=2\sqrt{\beta}\,\sum_{n=1}^{\infty}\left\{ e^{\frac{\pi n^{2}\beta^{2}}{2}}K_{0}\left(\frac{\pi n^{2}\beta^{2}}{2}\right)-\frac{1}{\beta n}\right\} +\frac{1}{\sqrt{\beta}}\left(\gamma-\log(16\pi)-2\log\left(\beta\right)\right).\label{ferrar formula in RUM paper}
\end{align}
\end{corollary}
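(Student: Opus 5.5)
The plan is to specialize Theorem \ref{gen Ferrar paper Petro} to $k=1$. Two points need care. The terms involving $\zeta_{k-1}$ and the infinite product have to be read through the conventions (\ref{silly convention but important}) of Remark \ref{trivial remark when k=1}, namely $\zeta_0:=0$ and $\Phi_1:=1$. The Whittaker function $W_{0,0}$ then has to be reduced to $K_0$ by means of (\ref{Whittaker to MAcdonald RHUM}).

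First I would check that the proof of Theorem \ref{gen Ferrar paper Petro} is legitimate for $k=1$. The argument in the previous section uses only four ingredients:
\begin{itemize}
\item the Mellin--Barnes representation (\ref{Mellin barnes integral WHIIITTTAKKKER}) with $\sigma=0$, $\rho=0$;
\item the functional equation (\ref{functional equation zetak}), which for $k=1$ is Riemann's functional equation (\ref{Functional equation Riemann Popov});
\item the convexity bounds (\ref{convex estimates cases Popov paper zetak});
\item the Laurent expansion of Lemma \ref{continuation Epstein a la Selbegr chowla} at $s=\frac{1}{2}$.
\end{itemize}
For $k=1$, Remark \ref{trivial remark when k=1} shows that this Laurent expansion reduces, under the convention (\ref{silly convention but important}), to the classical expansion $2\zeta(2s)=\frac{1}{s-1/2}+2\gamma+O(s-\frac12)$. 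So the residue computation (\ref{residue in ferrar prof}) remains valid with the $\zeta_{0}$ term and the logarithm of the product both equal to zero. This is the only delicate point, and I expect it to be the main (if mild) obstacle: one must make sure no step secretly used $k\geq 2$. The only such step is the representation (\ref{zetak ad Z2 RHUM}) inside the proof of the Lemma, and it is bypassed by the Remark.

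Next I would substitute $k=1$ into (\ref{Formula GENERAL Ferrar!}). Since $r_1(n)=2$ exactly when $n=m^2$ and vanishes otherwise, each series collapses to a sum over $m\geq1$ with a factor $2$. Applying (\ref{Whittaker to MAcdonald RHUM}) with $2x=\pi m^2\alpha^2$ gives
\[
W_{0,0}\left(\pi m^{2}\alpha^{2}\right)=m\alpha\,K_{0}\left(\tfrac{\pi m^{2}\alpha^{2}}{2}\right).
\]
The braced term therefore becomes
\[
\alpha\left\{e^{\frac{\pi m^2\alpha^2}{2}}K_0\left(\tfrac{\pi m^2\alpha^2}{2}\right)-\tfrac{1}{\alpha m}\right\}.
\]
Combined with the prefactor $\alpha^{-1/2}$ and the factor $2$, this yields exactly $2\sqrt{\alpha}\sum_{m\ge1}\{\cdots\}$.

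Finally I would simplify the residual terms. The $\zeta_0$ term vanishes, and $\sqrt{\pi}/\Gamma(\frac12)=1$. Using $\psi(\frac12)=-\gamma-2\log 2$ we get
\[
2\gamma+\psi\left(\tfrac12\right)-\log\left(4\pi\alpha^{2}\right)=\gamma-\log(16\pi)-2\log\alpha .
\]
Together with the factor $\alpha^{-1/2}$ this gives the left-hand side of (\ref{ferrar formula in RUM paper}). The identical computation with $\beta$ gives the right-hand side, which completes the deduction.
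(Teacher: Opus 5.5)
Your proposal is correct and follows the paper's proof: you set $k=1$ in Theorem~\ref{gen Ferrar paper Petro} with the conventions $\zeta_{0}:=0$, $\Phi_{1}:=1$, restrict $r_{1}$ to squares, use $\psi(\frac{1}{2})=-\gamma-2\log 2$, and reduce via $W_{0,0}(x)=\sqrt{x/\pi}\,K_{0}(x/2)$. Your extra check that the theorem's proof survives at $k=1$ goes beyond what the paper writes, but the Lemma is not the only place where $k\geq 2$ is tacitly used: the first contour shift picks up the pole at $s=\frac{k-1}{2}$ only if $\frac{k}{2}-\mu^{\prime}<\frac{k-1}{2}$, i.e.\ $\mu^{\prime}>\frac{1}{2}$ (automatic for $k\geq 2$), so for $k=1$ you must take $\mu^{\prime}\in(\frac{1}{2},1)$; likewise the bound $r_{k}(n)=O(n^{\frac{k}{2}-1})$ is false for $k=1$, though absolute convergence still holds because the braced terms are $O(n^{-\frac{k}{2}-1})$.
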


\begin{proof}
We set $k=1$ in (\ref{Formula GENERAL Ferrar!}).
Once again, we recall the conventions that $\Phi_{1}(y):=1$ and
$\zeta_{0}(s)=0$ (see Remark \ref{trivial remark when k=1} above). Remembering also that $r_{1}(n)$ is $2$ if $n$ is
a perfect square and is zero otherwise and, finally, appealing to the
special value for Euler's digamma function, $\psi\left(\frac{1}{2}\right)=-2\log(2)-\gamma$, we can deduce the transformation formula
\begin{align}
\frac{2}{\sqrt{\alpha}}\,\sum_{n=1}^{\infty}\left\{ \frac{e^{\frac{\pi n^{2}\alpha^{2}}{2}}}{n}W_{0,0}\left(\pi n^{2}\alpha^{2}\right)-\frac{1}{n}\right\} +\frac{1}{\sqrt{\alpha}}\left(\gamma-\log(16\pi)-2\log\left(\alpha\right)\right)\nonumber \\
=\frac{2}{\sqrt{\beta}}\,\sum_{n=1}^{\infty}\left\{ \frac{e^{\frac{\pi n^{2}\beta^{2}}{2}}}{n}W_{0,0}\left(\pi n^{2}\beta^{2}\right)-\frac{1}{n}\right\} +\frac{1}{\sqrt{\beta}}\left(\gamma-\log(16\pi)-2\log\left(\beta\right)\right).\label{W00 formula}
\end{align}
Using the reduction formula (\ref{Whittaker to MAcdonald RHUM}),
\[
W_{0,0}(x)=\sqrt{\frac{x}{\pi}}\,K_{0}\left(\frac{x}{2}\right),
\]
we find that (\ref{W00 formula}) yields Ferrar's identity (\ref{ferrar formula in RUM paper}).
\end{proof}

\begin{corollary}
Let $\alpha,\beta>0$ be such that $\alpha\beta=1$. Then the following
summation formula is valid
\begin{align}
\alpha\sum_{n=1}^{\infty}r_{2}(n)\left\{ e^{\pi n\alpha^{2}}\,\text{Ei}\left(-\pi n\alpha^{2}\right)+\frac{1}{\pi\alpha^{2}n}\right\} -\frac{1}{\alpha}\left(\gamma-4\log\left(\eta(i)\right)-\log\left(4\pi\alpha^{2}\right)\right)\nonumber \\
=\beta\sum_{n=1}^{\infty}r_{2}(n)\left\{ e^{\pi n\beta^{2}}\text{Ei}\left(-\pi n\beta^{2}\right)+\frac{1}{\pi\beta^{2}n}\right\} - \frac{1}{\beta}\left(\gamma-4\log\left(\eta(i)\right)-\log\left(4\pi\beta^{2}\right)\right),\label{Ferrar for r2(n) RHUM}
\end{align}
where, for $\text{Im}(\tau)>0$, $\eta(\tau)$ denotes Dedekind's
$\eta-$function (\ref{dedekind definition" RHUUUUM}) and $\text{Ei}(-x)$ represents the exponential integral function
\begin{equation}
\text{Ei}(-x):=-E_{1}(x)=-\intop_{x}^{\infty}\frac{e^{-u}}{u}\,du. \label{definition Ei in Ferrar}
\end{equation}
\end{corollary}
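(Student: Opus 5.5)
The plan is to specialize Theorem \ref{gen Ferrar paper Petro} to $k=2$ and then rewrite the Whittaker function $W_{-\frac{1}{2},0}$ in terms of the exponential integral. The main step is the reduction
\begin{equation*}
e^{x/2}\,W_{-\frac{1}{2},0}(x)=\sqrt{x}\,e^{x}E_{1}(x)=-\sqrt{x}\,e^{x}\,\text{Ei}(-x),\qquad x>0.
\end{equation*}
In the spirit of the paper, I would prove it with Mellin transforms rather than quote a table. Take the representation (\ref{formula marichev exp times whittaker in proof of Ferrar!}) with $\rho=\frac{1}{2}$ and $\sigma=0$. The integrand becomes $\Gamma^{2}(s+\frac{1}{2})\Gamma(\frac{1}{2}-s)\,x^{-s}$, because the normalizing factor $\Gamma^{2}(1)$ equals $1$. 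Substituting $s\mapsto s-\frac{1}{2}$ turns this into $x^{1/2}$ times the inverse Mellin integral of $\Gamma^{2}(s)\Gamma(1-s)$, taken over a line with $0<\text{Re}(s)<1$.

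To identify that inverse Mellin integral, I would compute the Mellin transform of $e^{x}E_{1}(x)$ directly. Write $e^{x}E_{1}(x)=\int_{0}^{\infty}e^{-xt}(1+t)^{-1}\,dt$ and apply Fubini. This gives
\begin{equation*}
\int_{0}^{\infty}x^{s-1}e^{x}E_{1}(x)\,dx=\Gamma(s)\int_{0}^{\infty}t^{-s}(1+t)^{-1}\,dt=\Gamma(s)\,\Gamma(s)\,\Gamma(1-s),
\end{equation*}
valid for $0<\text{Re}(s)<1$, where the $t$-integral is the Beta integral $B(1-s,s)=\Gamma(1-s)\Gamma(s)$. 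Mellin inversion then yields the reduction. I expect this identification to be the only genuinely non-routine point; in particular, the normalizing constant and the sign convention $\text{Ei}(-x)=-E_{1}(x)$ must be checked carefully. As a sanity check, both sides should behave like $x^{-1/2}$ as $x\to\infty$.

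With the reduction in hand, put $k=2$ in (\ref{Formula GENERAL Ferrar!}). The power $\alpha^{\frac{k}{2}-1}$ becomes $1$, and $\frac{1}{\sqrt{n}}e^{\pi n\alpha^{2}/2}W_{-\frac{1}{2},0}(\pi n\alpha^{2})=-\sqrt{\pi}\,\alpha\, e^{\pi n\alpha^{2}}\text{Ei}(-\pi n\alpha^{2})$. The subtracted term becomes $\pi^{-1/2}/(n\alpha)$. For the constants, use $\psi(1)=-\gamma$ and $\zeta_{1}(1)=2\zeta(2)=\frac{\pi^{2}}{3}$. For the product, use (\ref{Phi 2 and connection to eta Dedekind}), which gives $-2\log\Phi_{2}(1)=-\frac{\pi}{3}-4\log\eta(i)$.

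Multiplying both sides by $-1/\sqrt{\pi}$ turns the series into $\alpha\sum r_{2}(n)\{e^{\pi n\alpha^{2}}\text{Ei}(-\pi n\alpha^{2})+\frac{1}{\pi\alpha^{2}n}\}$. The constant terms then become
\begin{equation*}
-\frac{\pi}{3\alpha}-\frac{1}{\alpha}\Bigl(\gamma-\frac{\pi}{3}-4\log\eta(i)-\log(4\pi\alpha^{2})\Bigr).
\end{equation*}
Here the two $\frac{\pi}{3\alpha}$ contributions cancel, exactly as in Remark \ref{analogues infinite product lemma}. What remains is $-\frac{1}{\alpha}\bigl(\gamma-4\log\eta(i)-\log(4\pi\alpha^{2})\bigr)$. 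The same computation with $\beta$ in place of $\alpha$ gives the right-hand side, which yields (\ref{Ferrar for r2(n) RHUM}).
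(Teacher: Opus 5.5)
Your proposal is correct and follows the paper's proof: you specialize Theorem \ref{gen Ferrar paper Petro} to $k=2$, use $\Phi_{2}(1)=e^{\pi/6}\eta(i)^{2}$ (you make explicit the cancellation of the two $\frac{\pi}{3\alpha}$ terms coming from $\zeta_{1}(1)=\frac{\pi^{2}}{3}$, which the paper performs silently), and identify $W_{-\frac{1}{2},0}(x)=-\sqrt{x}\,e^{x/2}\,\text{Ei}(-x)$ by matching Mellin--Barnes integrals. The only difference is that you derive the Mellin transform $\Gamma^{2}(s)\Gamma(1-s)$ of $e^{x}E_{1}(x)$ directly, via Fubini and the Beta integral, whereas the paper quotes it from Marichev's table.
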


\begin{proof}
We set $k=2$ in (\ref{Formula GENERAL Ferrar!})
and use the fact that $\Phi_{2}(y)$ is connected to $\eta(iy)$ by (\ref{Phi 2 and connection to eta Dedekind}). This procedure yields the formula
\begin{align}
\sum_{n=1}^{\infty}r_{2}(n)\left\{ \frac{e^{\frac{\pi n\alpha^{2}}{2}}}{\sqrt{n}}W_{-\frac{1}{2},0}\left(\pi n\alpha^{2}\right)-\frac{1}{\sqrt{\pi}\,\alpha n}\right\} +\frac{\sqrt{\pi}}{\alpha}\left(\gamma-4\log\left(\eta(i)\right)-\log\left(4\pi\alpha^{2}\right)\right)\nonumber \\
=\sum_{n=1}^{\infty}r_{2}(n)\left\{ \frac{e^{\frac{\pi n\beta^{2}}{2}}}{\sqrt{n}}W_{-\frac{1}{2},0}\left(\pi n\beta^{2}\right)-\frac{1}{\sqrt{\pi}\,\beta n}\right\} +\frac{\sqrt{\pi}}{\beta}\left(\gamma-4\log\left(\eta(i)\right)-\log\left(4\pi\beta^{2}\right)\right),
\end{align}
Next, comparing the Mellin-Barnes integral (\ref{formula marichev exp times whittaker in proof of Ferrar!}) with [\cite{handbook_marichev}, p. 102, relation 3.3.2.1], 
\[
e^{x}\text{Ei}(-x)=-\frac{1}{2\pi i}\intop_{\sigma-i\infty}^{\sigma+i\infty}\Gamma^{2}(s)\,\Gamma(1-s)\,x^{-s}ds,\,\,\,\,\,0<\sigma<1,
\]
we find out that, for any $x>0$,
\begin{equation}
W_{-\frac{1}{2},0}(x)=-\sqrt{x}e^{\frac{x}{2}}\,\text{Ei}(-x), \label{Whittaker -1/2 0}
\end{equation}
which completes the proof of (\ref{Ferrar for r2(n) RHUM}). 
\end{proof}

\begin{remark}
Using the evaluation for $\eta(i)$ presented in (\ref{eta(i) ad Remarrrrk}), 
we can rewrite the two-dimensional analogue of Ferrar's formula (\ref{Ferrar for r2(n) RHUM})
in the following elegant manner
\begin{align}
\alpha\sum_{n=1}^{\infty}r_{2}(n)\left\{e^{\pi n\alpha^{2}}\,\text{Ei}\left(-\pi n\alpha^{2}\right)+\frac{1}{\pi\alpha^{2}n}\right\} -\frac{1}{\alpha}\left(\gamma-2\log\left(\frac{\Gamma^{2}\left(\frac{1}{4}\right)}{2\pi}\alpha\right)\right)\nonumber \\
=\beta\sum_{n=1}^{\infty}r_{2}(n)\left\{ e^{\pi n\beta^{2}}\text{Ei}\left(-\pi n\beta^{2}\right)+\frac{1}{\pi\beta^{2}n}\right\} - \frac{1}{\beta}\left(\gamma-2\log\left(\frac{\Gamma^{2}\left(\frac{1}{4}\right)}{2\pi}\beta\right)\right),\label{with explicit evalation of eta(i) FERRAR}
\end{align}
where $\alpha,\beta>0$ are such that $\alpha\beta=1$.
Using the definition of the arithmetical function $r_{2}(n)$, we
can rewrite both infinite series of (\ref{Ferrar for r2(n) RHUM})
as double series containing two variables of summation, resulting in a transformation formula of the form
\begin{align*}
\alpha\sum_{m,n\neq0}\left\{e^{\pi\alpha^{2}\left(m^{2}+n^{2}\right)}\,\text{Ei}\left(-\pi\alpha^{2}\left(m^{2}+n^{2}\right)\right)+\frac{1}{\pi\alpha^{2}\left(m^{2}+n^{2}\right)}\right\} -\frac{1}{\alpha}\left(\gamma-2\log\left(\frac{\Gamma^{2}\left(\frac{1}{4}\right)}{2\pi}\alpha\right)\right)\\
=\beta\sum_{m,n\neq0}\left\{e^{\pi\beta^{2}\left(m^{2}+n^{2}\right)}\text{Ei}\left(-\pi\beta^{2}\left(m^{2}+n^{2}\right)\right)+\frac{1}{\pi\beta^{2}\left(m^{2}+n^{2}\right)}\right\} - \frac{1}{\beta}\left(\gamma-2\log\left(\frac{\Gamma^{2}\left(\frac{1}{4}\right)}{2\pi}\beta\right)\right).
\end{align*}
\end{remark}

\begin{remark}
Other interesting examples can be deduced. For example, using the
reduction formula for the Whittaker function
\[
W_{-1,0}(x)=\frac{2\sqrt{x}(x+1)}{\sqrt{\pi}}K_{0}\left(\frac{x}{2}\right)-\frac{2x^{\frac{3}{2}}}{\sqrt{\pi}}K_{1}\left(\frac{x}{2}\right),
\]
one can write down yet another particular case of Ferrar's formula (\ref{Theorem generalized Ferrar with})
for the arithmetical function $r_{3}(n)$, namely, 
\begin{align}
\sqrt{\alpha}\,\sum_{n=1}^{\infty}r_{3}(n)\,\left\{ 2\pi n\alpha^{3}e^{\frac{\pi n\alpha^{2}}{2}}\left\{ K_{0}\left(\frac{\pi n\alpha^{2}}{2}\right)-K_{1}\left(\frac{\pi n\alpha^{2}}{2}\right)\right\} +2\alpha e^{\frac{\pi n\alpha^{2}}{2}}K_{0}\left(\frac{\pi n\alpha^{2}}{2}\right)-\frac{1}{\pi\alpha^{2}n^{\frac{3}{2}}}\right\} 	\nonumber
\\
+\frac{4}{\pi\alpha^{\frac{3}{2}}}\zeta\left(\frac{3}{2}\right)\,L\left(\frac{3}{2},\chi_{4}\right)+\frac{2}{\alpha^{\frac{3}{2}}}\left(\gamma+2-2\log\left\{ 2\prod_{m_{1},m_{2}\neq0}\left(1-e^{-2\pi\sqrt{m_{1}^{2}+m_{2}^{2}}}\right)\right\} -\log\left(4\pi\alpha^{2}\right)\right) \nonumber \\
=\sqrt{\beta}\,\sum_{n=1}^{\infty}r_{3}(n)\,\left\{ 2\pi n\beta^{3}e^{\frac{\pi n\beta^{2}}{2}}\left\{ K_{0}\left(\frac{\pi n\beta^{2}}{2}\right)-K_{1}\left(\frac{\pi n\beta^{2}}{2}\right)\right\} +2\beta e^{\frac{\pi n\beta^{2}}{2}}K_{0}\left(\frac{\pi n\beta^{2}}{2}\right)-\frac{1}{\pi\beta^{2}n^{\frac{3}{2}}}\right\} \nonumber \\
+\frac{4}{\pi\beta^{\frac{3}{2}}}\zeta\left(\frac{3}{2}\right)\,L\left(\frac{3}{2},\chi_{4}\right)+\frac{2}{\beta^{\frac{3}{2}}}\left(\gamma+2-2\log\left\{ 2\prod_{m_{1},m_{2}\neq0}\left(1-e^{-2\pi\sqrt{m_{1}^{2}+m_{2}^{2}}}\right)\right\} -\log\left(4\pi\beta^{2}\right)\right). \label{proof r3(n) Ferrar}
\end{align}

It is noteworthy that Ferrar could have derived (\ref{proof r3(n) Ferrar}) using only the functional equation for $\zeta_{3}(s)$ (\ref{functional equation zetak}) and the meromorphic expansion (\ref{Laurent Expansion zeta k at s=00003Dk/2}) for $k=3$. His knowledge of Popov's formula (\ref{ferrar_berndt_sum_of_squares_VTEX1}) for the case $k=2$, which he and Dixon had previously obtained as a corollary of Vorono\"i's summation formula [\cite{dixon_ferrar_circle(i)}, p. 51, eq. (3.12)] (see (\ref{III-106}) above), would have provided him the necessary tools. Similarly, he could have derived (\ref{with explicit evalation of eta(i) FERRAR}) by relying on Watson's formula (\ref{watson_intro_book}).

\end{remark}

\section{Proof of Theorem \ref{Koshliakov theorem rk(n)}}

Take $\alpha=\pi x$: then the formula we aim to prove, i.e. (\ref{general_KOSH_rk(n)}), is equivalent to the
relation
\begin{align}
2\,\sum_{m,n=1}^{\infty}r_{k}(m)\,r_{k}(n)\,K_{0}\left(2\pi\sqrt{m\,n}\,x\right)-\frac{2}{x^{k}}\,\sum_{m,n=1}^{\infty}r_{k}(m)\,r_{k}(n)\,K_{0}\left(\frac{2\pi\sqrt{m\,n}}{x}\right)\nonumber \\
=x^{-k}\left\{ 2\pi^{-\frac{k}{2}}\Gamma\left(\frac{k}{2}\right)\zeta_{k-1}\left(\frac{k}{2}\right)+2\gamma-4\log\left\{ \prod_{\mathbf{m}\in\mathbb{Z}^{k-1}\setminus\{\mathbf{0}\}}\left(1-e^{-2\pi|\mathbf{m}|}\right)\right\} -2\log\left(4\pi x\right)\right\} \nonumber \\
-2\pi^{-\frac{k}{2}}\Gamma\left(\frac{k}{2}\right)\zeta_{k-1}\left(\frac{k}{2}\right)-2\gamma+4\log\left\{ \prod_{\mathbf{m}\in\mathbb{Z}^{k-1}\setminus\{\mathbf{0}\}}\left(1-e^{-2\pi|\mathbf{m}|}\right)\right\} -2\log\left(\frac{x}{4\pi}\right).\label{Preliminary step towards proof}
\end{align}
We will prove (\ref{Preliminary step towards proof}) by starting with the infinite series
\[
\sum_{m,n=1}^{\infty}r_{k}(m)\,r_{k}(n)\,K_{0}\left(2\pi\sqrt{m\,n}\,x\right),
\]
which can be analytically treated by appealing to the Mellin-Barnes integral {[}\cite{handbook_marichev}, p. 204,
eq. (3.14.1.3){]}
\[
K_{0}(x)=\frac{1}{8\pi i}\,\intop_{\mu-i\infty}^{\mu+i\infty}\Gamma^{2}\left(\frac{s}{2}\right)\,\left(\frac{x}{2}\right)^{-s}ds,
\]
where $\mu>0$. Taking $\mu>k$, we see that
\begin{align}
\sum_{m,n=1}^{\infty}r_{k}(m)\,r_{k}(n)\,K_{0}\left(2\pi\sqrt{m\,n}\,x\right) & =\frac{1}{8\pi i}\,\intop_{\mu-i\infty}^{\mu+i\infty}\pi^{-s}\,\Gamma^{2}\left(\frac{s}{2}\right)\zeta_{k}^{2}\left(\frac{s}{2}\right)\,x^{-s}\,ds\nonumber \\
 & :=\frac{1}{4\pi i}\,\intop_{\frac{\mu}{2}-i\infty}^{\frac{\mu}{2}+i\infty}\eta_{k}^{2}(s)\,x^{-2s}\,ds,\label{Starting point Koshliakov generaaaalll!}
\end{align}
where the interchange of the orders of summation and integration is
just a consequence of Stirling's formula (\ref{Stirling exact form on Introduction}) and the choice $\mu>k$.
Furthermore, $\eta_{k}(s)$ denotes the completed zeta function defined
in (\ref{def eta_k}). We shall move the line of integration from $\text{Re}(s)=\frac{\mu}{2}$
to $\text{Re}(s)=-\delta$, $0<\delta<1$, by integrating along a
positively oriented rectangular contour $\mathscr{R}_{\mu}(T)$ with vertices $\frac{\mu}{2}\pm iT$
and $-\delta\pm iT$, $T>0$. We find that the integrand in (\ref{Starting point Koshliakov generaaaalll!})
possesses two double poles located at $\mathscr{R}_{\mu}(T)$, each one
located at the points $s=\frac{k}{2}$ and $s=0$. We may compute
the residue at $s=\frac{k}{2}$ by using Lemma \ref{continuation Epstein a la Selbegr chowla}, which gives
\[
\mathscr{R}_{\frac{k}{2}}:=\text{Res}_{s=\frac{k}{2}}\left\{ \eta_{k}^{2}(s)\,x^{-2s}\right\} =2x^{-\frac{k}{2}}\left\{ \pi^{-\frac{k}{2}}\Gamma\left(\frac{k}{2}\right)\mathscr{C}_{0}+\psi\left(\frac{k}{2}\right)-\log\left(\pi x\right)\right\} ,
\]
where $\mathscr{C}_{0}$ denotes the constant term in the Laurent
expansion for $\zeta_{k}(s)$, (\ref{Laurent Expansion zeta k at s=00003Dk/2}). Replacing $\mathscr{C}_{0}$ by its complete expression,
we find that
\begin{align}
\mathscr{R}_{\frac{k}{2}} & =2x^{-\frac{k}{2}}\left\{ \pi^{-\frac{k}{2}}\Gamma\left(\frac{k}{2}\right)\mathscr{C}_{0}+\psi\left(\frac{k}{2}\right)-\log\left(\pi x\right)\right\} \nonumber \\
=x^{-\frac{k}{2}} & \left\{ 2\pi^{-\frac{k}{2}}\Gamma\left(\frac{k}{2}\right)\,\zeta_{k-1}\left(\frac{k}{2}\right)+2\gamma-4\log\left\{ \prod_{\mathbf{m}\in\mathbb{Z}^{k-1}\setminus\{\mathbf{0}\}}\left(1-e^{-2\pi|\mathbf{m}|}\right)\right\} -2\log\left(4\pi x\right)\right\}\nonumber \\
=x^{-\frac{k}{2}} & \left\{ 2\pi^{-\frac{k}{2}}\Gamma\left(\frac{k}{2}\right)\,\zeta_{k-1}\left(\frac{k}{2}\right)+2\gamma-4\log\left(\Phi_{k}(1)\right)-2\log\left(4\pi x\right)\right\},
\label{computation R_k/2}
\end{align}
where $\Phi_{k}(y)$ denotes the analogue of the square of Dedekind's $\eta-$function defined by (\ref{generalization Dedekind}). 
Using the functional equation for $\zeta_{k}(s)$, (\ref{functional equation zetak}), and the computation of $\mathscr{R}_{\frac{k}{2}}$ (\ref{computation R_k/2}), we see that  
\begin{align*}
\mathscr{R}_{0}:=\text{Res}_{s=0}\left\{ \eta_{k}^{2}(s)\,x^{-2s}\right\}  & =-x^{-\frac{k}{2}}\text{Res}_{s=\frac{k}{2}}\left\{ \eta_{k}^{2}(s)\left(\frac{1}{x}\right)^{-2s}\right\} \\
=-2\pi^{-\frac{k}{2}}\Gamma\left(\frac{k}{2}\right)\,\zeta_{k-1}\left(\frac{k}{2}\right) & -2\gamma+4\,\log\left\{ \prod_{\mathbf{m}\in\mathbb{Z}^{k-1}\setminus\{\mathbf{0}\}}\left(1-e^{-2\pi|\mathbf{m}|}\right)\right\} -2\log\left(\frac{x}{4\pi}\right)\\
=-2\pi^{-\frac{k}{2}}\Gamma\left(\frac{k}{2}\right)\,\zeta_{k-1}\left(\frac{k}{2}\right) & -2\gamma+4\,\log\left(\Phi_{k}(1)\right)-2\log\left(\frac{x}{4\pi}\right).
\end{align*}
By Stirling's formula (\ref{Stirling exact form on Introduction}) and the Phragm\'en-Lindel\"of principle for $\zeta_{k}(s)$
(\ref{convex estimates cases Popov paper zetak}), we know that 
\[
\intop_{-\delta}^{\mu/2}\,\left|\eta_{k}^{2}\left(\sigma+iT\right)\right|x^{-2\sigma}d\sigma\ll e^{-\pi T}\,\intop_{-\delta}^{\mu/2}T^{2A(\sigma)}x^{-2\sigma}d\sigma\ll T^{B}e^{-\pi T},
\]
and so the integrals along the horizontal
segments of the rectangle $\mathscr{R}_{\mu}(T)$ tend to zero as $T\rightarrow\infty$. Hence, a
combination of Cauchy's residue theorem, the functional equation (\ref{functional equation zetak})
and (\ref{Starting point Koshliakov generaaaalll!}) gives
\begin{align*}
2\sum_{m,n=1}^{\infty}r_{k}(m)\,r_{k}(n)\,K_{0}\left(2\pi\sqrt{m\,n}\,x\right) & =2\mathscr{R}_{\frac{k}{2}}+2\mathscr{R}_{0}+\frac{1}{2\pi i}\,\intop_{-\delta-i\infty}^{-\delta+i\infty}\eta_{k}^{2}(s)\,x^{-2s}\,ds\\
 & =2\mathscr{R}_{\frac{k}{2}}+2\mathscr{R}_{0}+\frac{x^{-k}}{2\pi i}\,\intop_{\frac{k}{2}+\delta-i\infty}^{\frac{k}{2}+\delta+i\infty}\eta_{k}^{2}(s)\,x^{2s}\,ds\\
 & =2\mathscr{R}_{\frac{k}{2}}+2\mathscr{R}_{0}+\frac{2}{x^{k}}\,\sum_{m,n=1}^{\infty}r_{k}(m)\,r_{k}(n)\,K_{0}\left(\frac{2\pi\sqrt{m\,n}}{x}\right),
\end{align*}
which completes the proof of (\ref{Preliminary step towards proof}). $\blacksquare$

\bigskip{}

\bigskip{}

We end this paper with two very interesting corollaries that can be obtained from Theorem \ref{Koshliakov theorem rk(n)} when $k=2$ and $k=4$. To present the first of these corollaries, recall from Remark \ref{analogues infinite product lemma} that $\Phi_{2}(1)=e^{\frac{\pi}{6}}\eta(i)^{2}$ (see also (\ref{Phi 2 and connection to eta Dedekind})). With this in mind, the following corollary is quite easy to check. 

\begin{corollary}
Let $\alpha,\beta$ be two positive real numbers such that $\alpha\beta=\pi^{2}$.
Then the following analogue of Koshliakov's formula holds
\begin{align*}
\alpha\left\{ \gamma-4\log\left(\frac{\Gamma\left(\frac{1}{4}\right)}{2\pi^{\frac{3}{4}}}\right)-\log\left(4\beta\right)+\sum_{m,n=1}^{\infty}r_{2}(m)\,r_{2}(n)\,K_{0}\left(2\sqrt{m\,n}\,\alpha\right)\right\} \\
=\beta\left\{ \gamma-4\log\left(\frac{\Gamma\left(\frac{1}{4}\right)}{2\pi^{\frac{3}{4}}}\right)-\log\left(4\alpha\right)+\sum_{m,n=1}^{\infty}r_{2}(m)\,r_{2}(n)\,K_{0}\left(2\sqrt{m\,n}\beta\right)\right\} .
\end{align*}
\end{corollary}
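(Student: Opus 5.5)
The plan is to specialize Theorem \ref{Koshliakov theorem rk(n)} to $k=2$ and simplify the constants; no new analysis is needed. With $k=2$ the prefactors $\alpha^{k/2},\beta^{k/2}$ become $\alpha,\beta$. The double series are already of the required form $\sum_{m,n\geq1} r_2(m)\,r_2(n)\,K_0(2\sqrt{mn}\,\alpha)$ and its $\beta$-analogue, and the hypothesis $\alpha\beta=\pi^2$ is the same. So everything reduces to evaluating the constant
\[
2\eta_{1}(1)-4\log\Phi_2(1)
\]
appearing inside both braces.

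For the first term, use the definition (\ref{def eta_k}), $\eta_1(1)=\pi^{-1}\Gamma(1)\zeta_1(1)$. By (\ref{zeta 1(s) definition}) and the Basel identity (\ref{Basel Intro}), $\zeta_1(1)=2\zeta(2)=\pi^2/3$, so $2\eta_1(1)=2\pi/3$. This is the same value that the equivalent form $2\pi^{-k/2}\Gamma(k/2)\zeta_{k-1}(k/2)$ gives.

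For the second term, use (\ref{Phi 2 and connection to eta Dedekind}), which gives $\Phi_2(1)=e^{\pi/6}\eta(i)^2$. Hence
\[
-4\log\Phi_2(1)=-\frac{2\pi}{3}-8\log\eta(i).
\]
The $2\pi/3$ contributions cancel exactly; this cancellation is the one point worth double-checking, and it mirrors the passage from $\zeta_2(s)$ to the Kronecker limit formula (\ref{Kronecker's limit formula for the partiuclar case diagonal quadratic}) in Remark \ref{analogues infinite product lemma}.

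The left-hand brace therefore becomes
\[
2\gamma-8\log\eta(i)-2\log(4\beta)+2\sum_{m,n=1}^{\infty}r_2(m)\,r_2(n)\,K_0\left(2\sqrt{mn}\,\alpha\right),
\]
and the right-hand brace is the same with $\alpha\leftrightarrow\beta$. Dividing both sides of (\ref{general_KOSH_rk(n)}) by $2$ and inserting the evaluation (\ref{eta(i) ad Remarrrrk}), $\eta(i)=\Gamma\left(\frac{1}{4}\right)/(2\pi^{3/4})$, produces exactly the stated identity.
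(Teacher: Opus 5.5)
Your proposal is correct and follows the paper's route: specialize Theorem \ref{Koshliakov theorem rk(n)} to $k=2$, and use $\Phi_2(1)=e^{\pi/6}\eta(i)^2$ so that $2\eta_1(1)=2\pi/3$ cancels the $-2\pi/3$ coming from $-4\log\Phi_2(1)$. Then halve both sides and insert $\eta(i)=\Gamma(1/4)/(2\pi^{3/4})$; the paper only says this is ``quite easy to check,'' and your computation fills in exactly those steps.
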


Analogously, since (\ref{Second expression Laurent expansion}) offers a Laurent expansion for the Epstein zeta function $\zeta_{4}(s)$ containing the Glaisher-Kinkelin constant, the following interesting summation formula can be stated as an immediate consequence of Theorem \ref{Koshliakov theorem rk(n)}. 

\begin{corollary}
Let $\alpha,\beta$ be two positive real numbers such that $\alpha\beta=\pi^{2}$.
Then the following summation formula holds
\begin{align*}
\alpha^{2}\left\{ 2\gamma-24\log(A)+\frac{16}{3}\log(2)+2-2\log\left(\frac{2\beta}{\pi}\right)+2\sum_{m,n=1}^{\infty}r_{4}(m)\,r_{4}(n)\,K_{0}\left(2\sqrt{m\,n}\,\alpha\right)\right\} \\
=\beta^{2}\left\{ 2\gamma-24\log(A)+\frac{16}{3}\log(2)+2-2\log\left(\frac{2\alpha}{\pi}\right)+2\sum_{m,n=1}^{\infty}r_{4}(m)\,r_{4}(n)\,K_{0}\left(2\sqrt{m\,n}\,\beta\right)\right\} ,
\end{align*}
where $A$ is the Glaisher-Kinkelin constant (\ref{glaisher}).
\end{corollary}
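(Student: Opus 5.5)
The plan is to take $k=4$ in Theorem \ref{Koshliakov theorem rk(n)}. Since $\alpha\beta=\pi^{2}$ is exactly the hypothesis of that theorem, nothing needs to be re-proved analytically. The whole task is to rewrite the constant
\[
2\eta_{3}(2)+2\gamma-4\log\Phi_{4}(1)-2\log(4\beta),\qquad \Phi_{4}(1)=\prod_{\mathbf{m}\in\mathbb{Z}^{3}\setminus\{\mathbf{0}\}}\left(1-e^{-2\pi|\mathbf{m}|}\right),
\]
which appears on each side of (\ref{general_KOSH_rk(n)}) (and likewise with $\alpha$ in place of $\beta$), in terms of the Glaisher--Kinkelin constant.

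First, I will use the definition (\ref{def eta_k}) with $\Gamma(2)=1$ to get $2\eta_{3}(2)=2\pi^{-2}\zeta_{3}(2)$. This agrees with the equivalent $\tilde d_k$-form of the theorem written after its statement.

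Next, I will invoke the evaluation (\ref{indeed be an analogue}) of the product. That evaluation came from comparing the Laurent expansion of Lemma \ref{continuation Epstein a la Selbegr chowla} at $k=4$ with the Jacobi-based expansion (\ref{Second expression Laurent expansion}). Taking logarithms gives
\[
\log\Phi_{4}(1)=\frac{\zeta_{3}(2)}{2\pi^{2}}-\frac{4}{3}\log 2-\frac12\log(2\pi)-\frac12+6\log A .
\]
Consequently $2\pi^{-2}\zeta_{3}(2)-4\log\Phi_{4}(1)=\frac{16}{3}\log2+2\log(2\pi)+2-24\log A$. The key point is that the non-classical quantity $\zeta_{3}(2)$ cancels exactly; this is why the corollary contains no Epstein-type constant at all.

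Finally, I will combine the logarithms: $2\log(2\pi)-2\log(4\beta)=-2\log\left(\frac{2\beta}{\pi}\right)$. The bracket on the $\alpha^{2}$ side then becomes
\[
2\gamma-24\log A+\frac{16}{3}\log2+2-2\log\left(\frac{2\beta}{\pi}\right).
\]
The same computation with $\alpha$ in place of $\beta$ handles the $\beta^{2}$ side. The factor $\alpha^{k/2}=\alpha^{2}$ and the double series $2\sum_{m,n\ge1}r_{4}(m)r_{4}(n)K_{0}(2\sqrt{mn}\,\alpha)$ carry over unchanged from (\ref{general_KOSH_rk(n)}).

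There is no genuine analytic obstacle here. The only delicate point is bookkeeping the constants in (\ref{indeed be an analogue}), namely the $2^{4/3}$, $\sqrt{2\pi e}$ and $A^{6}$ factors, together with the input $\zeta'(2)=\frac{\pi^{2}}{6}(\gamma+\log 2\pi-12\log A)$ from which they came. As a safeguard I will re-derive (\ref{indeed be an analogue}) directly by equating the constant terms of (\ref{Laurent Expansion zeta k at s=00003Dk/2}) (with $\psi(2)=1-\gamma$) and (\ref{Second expression Laurent expansion}). This comparison gives $\gamma-2\log 2-(1-\gamma)-2\log\Phi_4(1)+\pi^{-2}\zeta_3(2)=2\gamma+\log(2\pi)-12\log A+\frac23\log2$, which reproduces the displayed value of $\log\Phi_{4}(1)$ before it is substituted.
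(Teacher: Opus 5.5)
Your proposal is correct and follows essentially the paper's route: you specialize Theorem \ref{Koshliakov theorem rk(n)} to $k=4$ and rewrite the constant using the Jacobi-based Laurent expansion (\ref{Second expression Laurent expansion}). The only cosmetic difference is that you pass through the product evaluation (\ref{indeed be an analogue}), which makes the cancellation of $\zeta_{3}(2)$ visible, whereas the paper's one-line justification amounts to inserting the constant term of (\ref{Second expression Laurent expansion}) directly; your bookkeeping checks out, including $\psi(2)=1-\gamma$ and the final $-2\log\left(\frac{2\beta}{\pi}\right)$.
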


\textit{Acknowledgements:} As this is a tribute publication, I shall repeat the words I wrote in the \textit{Acknowledgment} section of my PhD thesis. Therefore, I am deeply grateful to my advisor and, I would gladly say, dear friend,
Semyon Yakubovich, to whom this volume is dedicated. His unwavering support and guidance throughout
the hectic years of my PhD have been a constant source of strength. It is
difficult to imagine a more exemplary mentor. Thank you, Semyon, for embodying
hard work, courage, and kindness, and for the countless enriching
discussions about our beloved topics inside Mathematics. Without you,
I would not have found the courage to begin, let alone complete, the work that led to my PhD.

\end{document}